\theoremstyle{plain}
\newtheorem{theorem}{Theorem}[section]
\newtheorem{Def}[theorem]{Definition}
\newtheorem{lemma}[theorem]{Lemma}
\newtheorem{proposition}[theorem]{Proposition}
\theoremstyle{definition}
\theoremstyle{remark}
\newtheorem{case[theorem]}{Case}
\def \R{{\mathbb R}}
\def \N{{\mathbb N}}
\def \C{{\mathbb C}}
\def\H{{\mathbb H}}
\def\norm#1.#2.{\lVert#1\rVert_{#2}}
\def\R{\mathbb R}
\def \M{{\mathcal M}}
\def \W{{\mathcal W}}
\def \S{{\mathcal S}}
\def \H{{\mathcal H}}
\title[Qualitative uncertainty principles for the windowed Opdam--Cherednik transform ]{Qualitative uncertainty principles for the windowed Opdam--Cherednik transform on weighted modulation spaces
}
\author{Shyam Swarup Mondal} 
\author{Anirudha Poria}
\thanks{Research supported by ERC Starting Grant No. 713927.}
\address{ \endgraf Department of Mathematics, Indian Institute of Technology Guwahati, Guwahati 781039, India} 
\email{mondalshyam055@gmail.com}
\address{Department of Mathematics, Bar-Ilan University, Ramat-Gan 5290002, Israel}
\email{anirudhamath@gmail.com}
\keywords{Windowed Opdam--Cherednik transform; weighted modulation spaces; Cowling--Price's theorem; Hardy's theorem; Morgan's theorem.}
\subjclass[2010]{Primary 44A15; Secondary 42B35, 43A32, 33C45.}
\date{\today}
\begin{document}
\maketitle
\begin{abstract} 
	The aim of this paper is to establish a few qualitative uncertainty principles for the windowed Opdam--Cherednik transform on weighted modulation spaces associated with this transform. In particular, we obtain the Cowling--Price's, Hardy's and Morgan's uncertainty principles for this transform on weighted modulation spaces. The proofs of the results are based on versions of the Phragm{\'e}n--Lindl{\"o}f type result for several complex variables on weighted modulation spaces and the properties of the Gaussian kernel associated with the Jacobi--Cherednik operator. 
\end{abstract}    
 
\section{Introduction} 

The classical uncertainty principle   states that a non-zero function and its Fourier transform cannot   both be sharply localized.  Several forms of the uncertainty principle can be formulated depending on various ways of measuring the localization of the function.  Mainly, there are  two types of uncertainty principle:  qualitative  and  quantitative uncertainty principles.   Qualitative uncertainty principles  imply the vanishing of a function under some strong conditions on the function. In particular,    Cowling and Price \cite{cow83}, Morgan \cite{mor34},  Hardy \cite{har33},  and Beurling   \cite{hor91}  theorems are    examples of qualitative uncertainty principles.  On the other side, quantitative uncertainty principles    tell  us  information about  how a function and its Fourier transform are related to each other.  For example,  Donoho and Stark \cite{don89},  Slepian and Pollak \cite{sle61},  and   Benedicks \cite{ben85}  theorems  are  quantitative uncertainty principles.

One of the celebrated uncertainty principles in harmonic analysis is Hardy's theorem \cite{har33}. 
This theorem is about the decay of a measurable function and its Fourier transform at infinity. More precisely, let $a$ and $b$ be two positive constants and suppose that $f$ is a measurable function on $\R$ such that 
\[ |f(x)| \leq C e^{-a  x^2}  \quad \mathrm{and} \quad  |\hat{f}(\xi)| \leq C e^{-b  \xi^2}, \]
for some constants $ C > 0$ and where $\hat{f}$ is the Fourier transform of $f$ formally defined by 
$$\hat{f}(\xi)=\int_{-\infty}^{\infty} f(t) e^{-2 \pi i  \xi t} dt .$$ 
Then $f = 0$ almost everywhere if $ab> \frac{1}{4}$,   $f(x)=C e^{-a  x^2}$ for some constant $C$ if $ab=\frac{1}{4}$, and  there are infinitely many non-zero functions satisfying the assumptions 
if $ab<\frac{1}{4}$. Later,  an $L^p$-version of  this  theorem was proved  by Cowling and Price in \cite{cow83}.  It states that:  let  $1 \leq p, q \leq \infty$ with  $\min(p, q)$ is finite, and $f$ be a measurable function on $\mathbb{R}$ such that
$$\Vert e^{a x^2} f  \Vert_p < \infty\quad \text{and}\quad\Vert e^{b \xi^2} \hat{f}  \Vert_q < \infty.$$
Then  $f=0$ almost everywhere if $ab \geq \frac{1}{4}$, and  there are infinitely many non-zero functions satisfying the assumptions if $ab<\frac{1}{4}$.

The Hardy's and Cowling--Price's uncertainty principles were extended to different settings by many authors (see \cite{tha04}). Particularly, Morgan in \cite{mor34} obtained the following uncertainty principle by replacing the   function $e^{a x^2}$  by   $e^{a |x|^\alpha}$, where $\alpha > 2$ in Hardy's theorem. It states that  for   two positive real numbers  $\alpha, \beta$ such that  $\alpha >2$ and $1/\alpha+1/\beta=1$, if
\[ e^{a|x|^\alpha} f \in L^\infty(\R) \quad \text{and} \quad  e^{b|\lambda|^\beta} \hat{f} \in L^\infty(\R), \] then $f = 0$ almost everywhere for  $ (a \alpha)^{1/\alpha} (b \beta)^{1/\beta} > \left( \sin \left( \frac{\pi}{2}(\beta -1 ) \right) \right)^{1/\beta}$. Further,  $L^p$--$L^q$-version of Morgan's theorem was proved   by Ben Farah and Mokni in \cite{far03}.  For a more detailed study on the history of the uncertainty principle, and   many other generalizations and variations of the uncertainty principle, we refer to the book of Havin and J\"oricke \cite{hav94},  and the excellent survey of Folland and Sitaram \cite{fol97}.

Considerable attention has been devoted to finding generalizations to new contexts for the Hardy's, Cowling--Price's, and Morgan's uncertainty principles.  For example, these theorems were investigated in \cite{mej16} for the generalized Fourier transform, and in \cite{thanga01} for the Heisenberg group. Further, an $L^p$ version of Hardy's theorem was proved for the Dunkl transform in \cite{gal04}. In \cite{dah12},  Daher et al. have obtained  some uncertainty principles for the Cherednik transform as a generalization of Euclidean uncertainty principles for the Fourier transform. These results are  further extended to the Opdam--Cherednik transform in \cite{mej014} using composition properties of the Opdam--Cherednik transform and classical uncertainty principles for the Fourier transform.  Moreover, these types of uncertainty principles for the Opdam--Cherednik transform  on   modulation spaces were studied by the second author in \cite{por21}.    Recently, the  second author   introduced  the  windowed  Opdam--Cherednik transform and discussed  the time-frequency analysis of localization operators associated  with this  transform  on modulation spaces  in \cite{por2021}.  Further, we have investigated some quantitative uncertainty principles for the windowed Opdam--Cherednik transform in \cite{shyam}. However, upto our knowledge, qualitative uncertainty principles for this  transform  have not been studied in weighted modulation spaces. In this paper, we extend   the Cowling--Price's, Hardy's, and Morgan's uncertainty principles for the windowed Opdam--Cherednik transform on weighted modulation spaces associated with this transform. 

A common key  to obtain uncertainty principles for the Opdam--Cherednik transform or some other generalized transform is to use the H\"older inequality and show that this transform is an entire function on $\mathbb{C}$ (see  \cite{por21}).  In the case of the windowed Opdam--Cherednik transform, the main difficulty is that, the time-frequency shift of the window function in the integral representation of this transform does not satisfy the exponential decay condition, whereas for the Opdam--Cherednik transform the eigenfunction in the integral representation  satisfies the decay condition.  To overcome this difficulty, we  consider  the non-zero window function $g$ from a suitable modulation  space  and  apply  H\"older's inequality to show that this  transform is an entire function on $\mathbb{C}^2.$ 

An important motivation to prove these types of  qualitative uncertainty principles for the windowed Opdam--Cherednik transform on weighted modulation spaces arises from the classical uncertainty principles for the Fourier transform on the Lebesgue spaces.   Over the years,  modulation spaces have become one of the most active branches of research in modern contemporary mathematics due to their  appearances in current topics such as pseudo-differential operators,  partial differential equations, etc.,  and used extensively in several areas of  analysis, engineering, and physics.   Uncertainty principles have implications in two main areas: quantum mechanics and signal analysis, and weighted modulation spaces are broadly used in these areas. We hope that the study of uncertainty principles for the weighted modulation spaces makes a significant impact in these areas. Another important motivation to study the Jacobi--Cherednik operators arises from their relevance in the algebraic description of exactly solvable quantum many-body systems of Calogero--Moser--Sutherland type (see \cite{die00, hik96}) and they provide a useful tool in the study of special functions with root systems (see \cite{dun92, hec91}). These describe algebraically integrable systems in one dimension and have gained significant interest in mathematical physics. Other motivation for the investigation of uncertainty principles for  the windowed Opdam--Cherednik transform is to generalize the previous subjects which are bound with the physics. For a more detailed study, we refer to \cite{mej16}. 

Since weighted modulation spaces are much larger spaces than the weighted  Lebesgue spaces, a natural question to ask is: can we determine the functions $f$
such that $f$ and the windowed Opdam--Cherednik transform of $f$ satisfying the conditions of Hardy's, Cowling--Price's, and    Morgan's theorems for the weighted modulation spaces?  In this paper, we give affirmative answers to all of  these questions.  The  natural key to obtaining extensions of uncertainty principles for the   Opdam--Cherednik transform is a slice formula, that is, this transform is decomposed as a composition of the classical Fourier transform and the Jacobi--Cherednik intertwining operator (see \cite{mej014}). However, without using a slice formula, we obtain  uncertainty principles for  the windowed Opdam--Cherednik transform by using an estimate for  the Gaussian kernel \cite{fit89}.   Since the standard weighted modulation spaces are not suited to this transform, here  we consider the weighted modulation spaces associated with this transform. We prove  uncertainty principles by using   versions of the Phragm{\'e}n--Lindl{\"o}f type result   for several complex variables on weighted modulation spaces and the properties of the Gaussian kernel associated with the Jacobi--Cherednik operator.    

Apart from introduction, the paper is organized as follows. In Section \ref{sec2}, we recall some basic facts about the Jacobi--Cherednik operator and give the main results for the Opdam--Cherednik transform. Then, we discuss the results related to the windowed Opdam--Cherednik transform and give some properties of the Gaussian kernel associated with the Jacobi--Cherednik operator. In Section \ref{sec3}, we study the weighted modulation spaces associated with the windowed Opdam--Cherednik transform. In Section \ref{sec4}, we establish a few qualitative uncertainty principles for this transform on weighted modulation spaces. First, we prove a version of the Phragm{\'e}n--Lindl{\"o}f type result for several complex variables, and using it, we obtain the Cowling--Price's theorem for the windowed Opdam--Cherednik transform on weighted modulation spaces. Then, we prove an analogue of the classical Hardy's theorem for this transform on weighted modulation spaces. Finally, we give another version of the Phragm{\'e}n--Lindl{\"o}f type result  and obtain Morgan's theorem for the windowed Opdam--Cherednik transform on weighted modulation spaces.

\section{Harmonic analysis and the windowed Opdam--Cherednik transform}\label{sec2}
In this section, we  collect the  necessary definitions and results from the harmonic analysis related to the windowed Opdam–Cherednik transform. For a detailed discussion on this transform, we refer  to \cite{por2021}. 

Let $T_{\alpha, \beta}$ denote the Jacobi--Cherednik differential--difference operator (also called the Dunkl--Cherednik operator)
\[T_{\alpha, \beta} f(x)=\frac{d}{dx} f(x)+ \Big[ 
(2\alpha + 1) \coth x + (2\beta + 1) \tanh x \Big] \frac{f(x)-f(-x)}{2} - \rho f(-x), \]
where $\alpha, \beta$ are two parameters satisfying $\alpha \geq \beta \geq -\frac{1}{2}$, $\alpha > -\frac{1}{2}$, and $\rho= \alpha + \beta + 1$. Let $\lambda \in \C$. The Opdam hypergeometric functions $G^{\alpha, \beta}_\lambda$ on $\R$ are eigenfunctions $T_{\alpha, \beta} G^{\alpha, \beta}_\lambda(x)=i \lambda  G^{\alpha, \beta}_\lambda(x)$ of $T_{\alpha, \beta}$ that are normalized such that $G^{\alpha, \beta}_\lambda(0)=1$. The eigenfunction $G^{\alpha, \beta}_\lambda$ is given by
\[G^{\alpha, \beta}_\lambda (x)= \varphi^{\alpha, \beta}_\lambda (x) - \frac{1}{\rho - i \lambda} \frac{d}{dx}\varphi^{\alpha, \beta}_\lambda (x)=\varphi^{\alpha, \beta}_\lambda (x)+ \frac{\rho+i \lambda}{4(\alpha+1)} \sinh 2x \; \varphi^{\alpha+1, \beta+1}_\lambda (x),  \]
where $\varphi^{\alpha, \beta}_\lambda (x)={}_2F_1 \left(\frac{\rho+i \lambda}{2}, \frac{\rho-i \lambda}{2} ; \alpha+1; -\sinh^2 x \right) $ is the classical Jacobi function. For any $ \lambda \in \C$ and $x \in  \R$, the eigenfunction
$G^{\alpha, \beta}_\lambda$ satisfy
$|G^{\alpha, \beta}_\lambda(x)| \leq C \; e^{-\rho |x|} e^{|\text{Im} (\lambda)| |x|},$
where $C$ is a positive constant. Since $\rho > 0$, we have
$
|G^{\alpha, \beta}_\lambda(x)| \leq C \; e^{|\text{Im} (\lambda)| |x|}. 
$

Let   $C_c (\R)$ denotes the space of continuous functions on $\R$ with compact support.  The Opdam--Cherednik transform is defined as follows.
\begin{Def}
Let $\alpha \geq \beta \geq -\frac{1}{2}$ with $\alpha > -\frac{1}{2}$. The Opdam--Cherednik transform $ \H_{\alpha, \beta} f$ of a function $f \in C_c(\R)$ is defined by
\[ \H_{\alpha, \beta} f (\lambda)=\int_{\R} f(x)\; G^{\alpha, \beta}_\lambda(-x)\; A_{\alpha, \beta} (x) dx \quad \text{for all } \lambda \in \C, \] 
where $A_{\alpha, \beta} (x)= (\sinh |x| )^{2 \alpha+1} (\cosh |x| )^{2 \beta+1}$. The inverse Opdam--Cherednik transform for a suitable function $g$ on $\R$ is given by
\[ \H_{\alpha, \beta}^{-1} g(x)= \int_{\R} g(\lambda)\; G^{\alpha, \beta}_\lambda(x)\; d\sigma_{\alpha, \beta}(\lambda) \quad \text{for all } x \in \R, \]
where $$d\sigma_{\alpha, \beta}(\lambda)= \left(1- \dfrac{\rho}{i \lambda} \right) \dfrac{d \lambda}{8 \pi |C_{\alpha, \beta}(\lambda)|^2}$$ and 
$$C_{\alpha, \beta}(\lambda)= \dfrac{2^{\rho - i \lambda} \Gamma(\alpha+1) \Gamma(i \lambda)}{\Gamma \left(\frac{\rho + i \lambda}{2}\right)\; \Gamma\left(\frac{\alpha - \beta+1+i \lambda}{2}\right)}, \quad \lambda \in \C \setminus i \mathbb{N}.$$
\end{Def}

The Plancherel formula is given by 
\begin{equation}\label{eq03}
\int_{\R} |f(x)|^2 A_{\alpha, \beta}(x) dx=\int_\R  \H_{\alpha, \beta} f(\lambda) \overline{\H_{\alpha, \beta} \check{f}(-\lambda)} \; d \sigma_{\alpha, \beta} (\lambda),
\end{equation}
where $\check{f}(x):=f(-x)$.

Let $L^p(\R,A_{\alpha, \beta} )$ (resp. $L^p(\R, \sigma_{\alpha, \beta} )$), $p \in [1, \infty] $, denote the $L^p$-spaces corresponding to the measure $A_{\alpha, \beta}(x) dx$ (resp. $d | \sigma_{\alpha, \beta} |(x)$). The Schwartz space $\S_{\alpha, \beta}(\R )=(\cosh x )^{-\rho} \S(\R)$ is defined as the space of all differentiable functions $f$ such that 
$$ \sup_{x \in \R} \; (1+|x|)^m e^{\rho |x|} \left|\frac{d^n}{dx^n} f(x) \right|<\infty,$$ 
for all $m, n \in \N_0 = \N \cup \{0\}$, equipped with the obvious seminorms. The Opdam--Cherednik transform $\H_{\alpha, \beta}$ and its inverse $\H_{\alpha, \beta}^{-1}$ are topological isomorphisms between the space $\S_{\alpha, \beta}(\R )$ and the space $\S(\R)$ (see \cite{sch08}, Theorem 4.1).

The generalized translation operator associated with the Opdam--Cherednik transform is defined by \cite{ank12}
\begin{equation}\label{eq044}
	\tau_x^{(\alpha, \beta)} f(y)=\int_{\R} f(z) \; {d\mu}_{\;x, y}^{(\alpha, \beta)}(z),
\end{equation}
where ${d\mu}_{\;x, y}^{(\alpha, \beta)}$ is given by 
\begin{equation}\label{eq05}
	{d\mu}_{\;x, y}^{(\alpha, \beta)}(z)=
	\begin{cases} 
		\mathcal{K}_{\alpha, \beta}(x,y,z)\; A_{\alpha, \beta}(z)\; dz & \text{if} \;\; xy \neq 0 \\
		d \delta_x (z) & \text{if} \;\; y=0 \\
		d \delta_y (z) & \text{if} \;\; x=0
	\end{cases}  
\end{equation}
and
\begin{equation*}
	\begin{aligned}
		\mathcal{K}_{\alpha, \beta} {} & (x,y,z) 
		=M_{\alpha, \beta} |\sinh x \cdot \sinh y \cdot \sinh z  |^{-2 \alpha} \int_0^\pi g(x, y, z, \chi)_+^{\alpha-\beta-1} 
		\\
		& \times \left[ 1 - \sigma^\chi_{x,y,z}+ \sigma^\chi_{x,z,y} + \sigma^\chi_{z,y,x} + \frac{\rho}{\beta+\frac{1}{2}} \coth x \cdot \coth y \cdot \coth z (\sin \chi)^2 \right] \times (\sin \chi)^{2 \beta}\; d \chi,
\end{aligned}
\end{equation*}
where
$$ M_{\alpha, \beta}=\frac{\Gamma(\alpha+1)}{\sqrt{\pi} \Gamma(\alpha-\beta) \Gamma\left(\beta+\frac{1}{2}\right)}, $$ 
if $x, y, z \in \R \setminus \{0\} $ satisfy the triangular inequality  $||x|-|y||<|z|<|x|+|y|$, and $\mathcal{K}_{\alpha, \beta} (x,y,z) =0$ otherwise. Here 
\[ \sigma^\chi_{x,y,z}=
\begin{cases} 
	\frac{\cosh x \cdot \cosh y - \cosh z \cdot \cos \chi}{\sinh x \cdot \sinh y} & \text{if} \;\; xy \neq 0 \\
	0 & \text{if} \;\; xy = 0 
\end{cases}
\quad \text{for} \; x, y, z \in \R, \; \chi \in [0,\pi], \]
$g(x, y, z, \chi) = 1- \cosh^2 x - \cosh^2 y - \cosh^2 z + 2 \cosh x \cdot \cosh y \cdot \cosh z \cdot \cos \chi$, and 
\[ g_+=
\begin{cases} 
	g &  \text{if} \;\;  g> 0 \\
	0 & \text{if} \;\;  g \leq 0.
\end{cases} \]
The kernel $\mathcal{K}_{\alpha, \beta} (x,y,z)$ satisfies the following symmetry properties:
$$\mathcal{K}_{\alpha, \beta} (x,y,z)=\mathcal{K}_{\alpha, \beta} (y,x,z), \mathcal{K}_{\alpha, \beta} (x,y,z)=\mathcal{K}_{\alpha, \beta} (-z,y,-x), \mathcal{K}_{\alpha, \beta} (x,y,z)=\mathcal{K}_{\alpha, \beta} (x,-z,-y).$$

For every $x,y \in \R$, we have
$
	\tau_x^{(\alpha, \beta)} f(y) = \tau_y^{(\alpha, \beta)} f(x),
$
and 
$
	\H_{\alpha, \beta}(\tau_x^{(\alpha, \beta)} f )(\lambda)=G_\lambda^{\alpha, \beta} (x) \; \H_{\alpha, \beta} (f)(\lambda),
$
for $f \in C_c(\R)$. For a more detailed study on the Opdam--Cherednik transform, we refer to \cite{sch08, opd00}.

Let $g \in L^2(\R,A_{\alpha, \beta} )$ and $\xi \in \R$, the modulation operator of $g$ associated with the Opdam--Cherednik transform is defined by 
$$
	\M^{(\alpha, \beta)}_\xi g={\H_{\alpha, \beta}}^{-1} 
	 \left( \sqrt{\tau_\xi^{(\alpha, \beta)} 
	 	|{\H_{\alpha, \beta}} (g)|^2 } 
 	\right) .
$$
Then, for any $g \in L^2(\R,A_{\alpha, \beta} )$ and $\xi \in \R$,   using the Plancherel formula (\ref{eq03}) and 
the translation invariance of the Plancherel measure 
$d \sigma_{\alpha, \beta}$,  we get 
$
	 \Vert \M^{(\alpha, \beta)}_\xi g  \Vert_{L^2(\R,A_{\alpha, \beta} )}=\left\Vert g \right\Vert_{L^2(\R,A_{\alpha, \beta} )}.
$
Now, for a non-zero window function $g \in L^2(\R,A_{\alpha, \beta} )$ and $(x, \xi) \in \R^2$, we define the function $g_{x, \xi}^{(\alpha, \beta)}$   by 
\begin{equation*}
	g_{x, \xi}^{(\alpha, \beta)}= \tau_x^{(\alpha, \beta)} \M^{(\alpha, \beta)}_\xi g.
\end{equation*}
For any   $f \in L^2(\R,A_{\alpha, \beta} )$,  the windowed Opdam--Cherednik transform is defined by 
\begin{equation}\label{eq13}
\W^{(\alpha, \beta)}_g(f)(x,\xi)=\int_\R f(s) \; \overline{g_{x, \xi}^{(\alpha, \beta)}(-s)} \;  A_{\alpha, \beta}(s) \; ds, \quad (x,\xi) \in \R^2.
\end{equation}
We define the measure $A_{\alpha, \beta} \otimes \sigma_{\alpha, \beta}$ on $\R^2$ by 
\begin{equation*}
d(A_{\alpha, \beta} \otimes \sigma_{\alpha, \beta})(x, \xi)= A_{\alpha, \beta}(x)  dx \; d|\sigma_{\alpha, \beta}|(\xi).
\end{equation*}
The windowed Opdam--Cherednik transform satisfies the following properties (see \cite{por2021}). 
\begin{proposition}

\begin{enumerate}
	\item $($Plancherel's formula$)$  Let $g \in L^2(\R,A_{\alpha, \beta} )$ be a non-zero window function.  Then  for every $f \in L^2(\R,A_{\alpha, \beta})$,  we have 
	\begin{equation}\label{eq17}
		\left\Vert \W^{(\alpha, \beta)}_g(f) \right\Vert_{L^2(\R^2,  A_{\alpha, \beta}\otimes \sigma_{\alpha, \beta})} = \Vert f \Vert_{L^2(\R,A_{\alpha, \beta})} \; \Vert g \Vert_{L^2(\R,A_{\alpha, \beta})}.
	\end{equation}
\item $($Reconstruction formula$)$ Let $g \in L^2(\R,A_{\alpha, \beta} )$ be a non-zero positive window function. Then for every $F \in L^2(\R^2,A_{\alpha, \beta} \otimes \sigma_{\alpha, \beta})$, we have
\[{\W^{(\alpha, \beta)}_g}^{-1}(F)(\cdot )= \frac{1}{\Vert g \Vert^2_{L^2(\R,A_{\alpha, \beta})}}  \iint_{\R^2} F(x, \xi)\; g_{x, \xi}^{(\alpha, \beta)} (- \;\cdot) \; d(A_{\alpha, \beta} \otimes \sigma_{\alpha, \beta}) (x, \xi), \]
weakly in $L^2(\R,A_{\alpha, \beta} )$. 
\end{enumerate}	
\end{proposition}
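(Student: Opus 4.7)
The plan is to deduce both parts from the scalar Plancherel identity \eqref{eq03} together with the three structural identities already recorded in this section: the eigenfunction rule $\H_{\alpha,\beta}(\tau_x^{(\alpha,\beta)}f)(\lambda)=G_\lambda^{\alpha,\beta}(x)\,\H_{\alpha,\beta}(f)(\lambda)$, the defining property $\bigl|\H_{\alpha,\beta}(\M_\xi^{(\alpha,\beta)}g)\bigr|^{2}=\tau_\xi^{(\alpha,\beta)}|\H_{\alpha,\beta}(g)|^{2}$ of the modulation operator, and the translation invariance of the Plancherel measure $d\sigma_{\alpha,\beta}$. Part (2) will then follow from (1) by a polarization/adjoint argument.

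For (1), with $\xi\in\R$ fixed and $G_\xi:=\M_\xi^{(\alpha,\beta)}g$, I would first apply the symmetry $\tau_x^{(\alpha,\beta)}h(y)=\tau_y^{(\alpha,\beta)}h(x)$ noted above \eqref{eq13} to rewrite $\W_g^{(\alpha,\beta)}(f)(x,\xi)$ as an object in which $x$ enters only through a translation acting on $G_\xi$. Taking the Opdam--Cherednik transform in $x$ and invoking the eigenfunction rule diagonalizes that translation and produces a factor of the form $\H_{\alpha,\beta}(f)(\lambda)\,\overline{\H_{\alpha,\beta}(G_\xi)(\lambda)}$, up to the check-function asymmetry in \eqref{eq03}. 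Applying \eqref{eq03} a second time to compute the $L^{2}(\R,A_{\alpha,\beta})$-norm in $x$ leaves an integral against $d\sigma_{\alpha,\beta}$ of $|\H_{\alpha,\beta}(f)(\lambda)|^{2}\,|\H_{\alpha,\beta}(G_\xi)(\lambda)|^{2}$, and the defining property of $\M_\xi^{(\alpha,\beta)}$ then replaces $|\H_{\alpha,\beta}(G_\xi)|^{2}$ by $\tau_\xi^{(\alpha,\beta)}|\H_{\alpha,\beta}(g)|^{2}$. Integrating in $\xi$ against $d|\sigma_{\alpha,\beta}|$, a Fubini interchange together with the translation invariance of $d\sigma_{\alpha,\beta}$ lets the $\tau_\xi^{(\alpha,\beta)}$ be discharged, and what is left factors cleanly as $\|f\|_{L^{2}(\R,A_{\alpha,\beta})}^{2}\,\|g\|_{L^{2}(\R,A_{\alpha,\beta})}^{2}$, which is \eqref{eq17}.

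For (2), I would polarize \eqref{eq17} to obtain
\[
\bigl\langle \W_g^{(\alpha,\beta)}f,\W_g^{(\alpha,\beta)}h\bigr\rangle_{L^{2}(\R^{2},A_{\alpha,\beta}\otimes\sigma_{\alpha,\beta})}=\|g\|_{L^{2}(\R,A_{\alpha,\beta})}^{2}\,\langle f,h\rangle_{L^{2}(\R,A_{\alpha,\beta})}
\]
for all $f,h\in L^{2}(\R,A_{\alpha,\beta})$, then expand $\W_g^{(\alpha,\beta)}h$ on the left via \eqref{eq13} and apply Fubini to rewrite the inner product as $\langle \Phi_f,h\rangle_{L^{2}(\R,A_{\alpha,\beta})}$, where
\[
\Phi_f(s):=\iint_{\R^{2}}\W_g^{(\alpha,\beta)}(f)(x,\xi)\,g_{x,\xi}^{(\alpha,\beta)}(-s)\,d(A_{\alpha,\beta}\otimes\sigma_{\alpha,\beta})(x,\xi).
\]
Since the identity holds for every $h$, it follows that $\Phi_f=\|g\|^{2}f$ weakly in $L^{2}(\R,A_{\alpha,\beta})$, which, after substituting $F=\W_g^{(\alpha,\beta)}(f)$, is precisely the stated reconstruction formula. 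The positivity of $g$ is needed only to ensure that the square root $\sqrt{\tau_\xi^{(\alpha,\beta)}|\H_{\alpha,\beta}(g)|^{2}}$ defining $\M_\xi^{(\alpha,\beta)}$ is unambiguous.

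The main technical obstacle I expect is tracking the check-function and conjugation asymmetries built into \eqref{eq03} together with the fact that $d\sigma_{\alpha,\beta}$ is a signed/complex measure. Neither point requires a genuinely new idea: both are controlled by the symmetry properties of the kernel $\mathcal{K}_{\alpha,\beta}$ and of the translation/modulation operators recorded earlier in this section.
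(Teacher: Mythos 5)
The paper itself offers no proof of this proposition --- it is quoted from \cite{por2021} --- and your outline reproduces the standard argument given there: diagonalize the translation in $x$ through the product formula $\H_{\alpha,\beta}(\tau_x^{(\alpha,\beta)}f)(\lambda)=G_\lambda^{\alpha,\beta}(x)\,\H_{\alpha,\beta}(f)(\lambda)$, apply the scalar Plancherel identity \eqref{eq03} in each variable, discharge $\tau_\xi^{(\alpha,\beta)}$ by Fubini--Tonelli together with the invariance of the Plancherel measure, and deduce part (2) from part (1) by polarization. This is correct, and your closing remark rightly isolates the only delicate bookkeeping, namely the $\check f$/conjugation asymmetry built into \eqref{eq03} and the fact that $d\sigma_{\alpha,\beta}$ is complex while the norm on $L^2(\R^2,A_{\alpha,\beta}\otimes\sigma_{\alpha,\beta})$ is taken against $d|\sigma_{\alpha,\beta}|(\xi)$.
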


Let $t > 0$. The Gaussian kernel $E^{\alpha, \beta}_t$ associated with the Jacobi--Cherednik operator is defined by
\begin{equation}\label{eq04}
	E^{\alpha, \beta}_t(s)={\W^{(\alpha, \beta)}_g}^{-1}(e^{-t (\lambda^2+\mu^2)})(s), \quad \text{for all } s\in \R.
\end{equation}
For all $t > 0$, $E^{\alpha, \beta}_t$ is an $C^\infty$-function on $\R$. Moreover, for all $t > 0$ and all $\lambda, \mu \in \R$, we have
\begin{equation}\label{eq05}
	\W^{(\alpha, \beta)}_g(E^{\alpha, \beta}_t) (\lambda, \mu)=e^{-t (\lambda^2+\mu^2)}.
\end{equation}
We refer to \cite{cho03} for further properties of the Gaussian kernel $E^{\alpha, \beta}_t$. From (\cite{fit89}, Theorem 3.1), there exist two real numbers $\mu_1$ and $\mu_2$, such that
\begin{equation}\label{eq06}
	\frac{e^{\mu_1 t}}{2^{2\alpha+1} \Gamma(\alpha+1) t^{\alpha+1}} \frac{e^{-\frac{x^2}{4t}}}{\sqrt{B_{\alpha, \beta}(x)}} \leq E^{\alpha, \beta}_t(x) \leq  \frac{e^{\mu_2 t}}{2^{2\alpha+1} \Gamma(\alpha+1) t^{\alpha+1}} \frac{e^{-\frac{x^2}{4t}}}{\sqrt{B_{\alpha, \beta}(x)}}, \quad \forall x \in \R,
\end{equation}
where $B_{\alpha, \beta} (x)= (\sinh |x|/|x| )^{2 \alpha+1} (\cosh |x| )^{2 \beta+1}$ for all $x \in \R \setminus \{0\}$ and $B_{\alpha, \beta}(0)=1$. Further, we have $A_{\alpha, \beta}(x)= |x|^{2\alpha+1} B_{\alpha, \beta}(x)$ and for all $x \in \R$, $B_{\alpha, \beta} (x) \geq 1$.

\section{Weighted modulation spaces associated with the  windowed Opdam--Cherednik transform}\label{sec3}

For $x, \xi \in \R$, let $M_\xi$ and $T_x$ denote the operators of modulation and translation. Then, the short-time Fourier transform (STFT)  of a function $f$ with respect to a window function $g \in  \S(\R)$ is defined by
\[ V_g f (x,\xi)=\langle f, M_\xi T_x g \rangle=\int_{\R} f(t) \overline{g(t-x)} e^{-2 \pi i \xi t} dt, \quad (x,\xi) \in \R^2. \] 

The modulation spaces were introduced by Feichtinger \cite{fei03, fei97}, by imposing integrability conditions on the STFT of tempered distributions.
 Here, we are interested in weighted modulation spaces with respect to the measure $A_{\alpha, \beta} \otimes \sigma_{\alpha, \beta}.$ We define the measure $A_{\alpha, \beta} \otimes  A_{\alpha, \beta}$  on $\R^2$ by $d(A_{\alpha, \beta} \otimes A_{\alpha, \beta})(x, \xi)=A_{\alpha, \beta}(x)dx\;A_{\alpha, \beta}(\xi)d\xi$.  
\begin{Def}
 Let $m $ be a non-negative function on $\mathbb{R}^{2}$,  $g \in \mathcal{S}(\R)$ be a  fixed non-zero window function, and $1 \leq p,q \leq \infty$. Then the weighted modulation space $M_m^{p,q}(\R, A_{\alpha, \beta})$  consists of all tempered distributions $f \in \mathcal{S'}(\R)$ such that $V_g f \in L_m^{p,q}(\R^2, A_{\alpha, \beta} \otimes  A_{\alpha, \beta})$. The norm on $M_m^{p,q}(\R, A_{\alpha, \beta})$ is 
\begin{eqnarray*}
\Vert f \Vert_{M_m^{p,q}(\R, A_{\alpha, \beta})}
&=& \Vert V_g f \Vert_{L_m^{p,q}(\R^2, A_{\alpha, \beta} \otimes  A_{\alpha, \beta})} \\
&=& \bigg( \int_{\R} \bigg( \int_{\R} |V_gf(x,\xi)|^p |m(x,\xi)|^p A_{\alpha, \beta}(x) dx \bigg)^{q/p} A_{\alpha, \beta}(\xi) d\xi \bigg)^{1/q} < \infty,
\end{eqnarray*}
with the usual adjustments if $p$ or $q$ is infinite.
\end{Def}
 If $p=q$, then we write $M_m^p(\R, A_{\alpha, \beta})$ instead of $M_m^{p,p}(\R, A_{\alpha, \beta})$. When $m=1$ on $\R^2$, then we write $M^{p, q}(\R, A_{\alpha, \beta})$ and $M^p(\R, A_{\alpha, \beta})$  for   $M_m^{p, q}(\R, A_{\alpha, \beta})$ and   $M_m^p(\R, A_{\alpha, \beta})$  respectively. 
Also,  we denote by $M_m^p(\R, \sigma_{\alpha, \beta})$ the weighted modulation space corresponding to the measure $d |\sigma_{\alpha, \beta}|(x)$ and $M_m^p(\R)$ the  weighted modulation space corresponding to the Lebesgue measure $dx$.

We define the measure $(A_{\alpha, \beta} \otimes \sigma_{\alpha, \beta})* (A_{\alpha, \beta} \otimes \sigma_{\alpha, \beta})$ on $\R^2\times \R^2$ by 
\begin{align*} 
d((A_{\alpha, \beta} \otimes \sigma_{\alpha, \beta})* (A_{\alpha, \beta} \otimes \sigma_{\alpha, \beta}))((x_1, \xi_1), (x_2, \xi_2))= 	d(A_{\alpha, \beta} \otimes \sigma_{\alpha, \beta} )(x_1, \xi_1 )\;	d(A_{\alpha, \beta} \otimes \sigma_{\alpha, \beta} )  (x_2, \xi_2).
\end{align*}

 Then $M_m^{p,q}(\R^2, A_{\alpha, \beta} \otimes \sigma_{\alpha, \beta})$ denotes the weighted modulation space  on $\mathbb{R}^2$  with respect to the measure  $A_{\alpha, \beta} \otimes \sigma_{\alpha, \beta} $. The norm on $M_m^{p,q}(\R^2, A_{\alpha, \beta} \otimes \sigma_{\alpha, \beta})$ is given by 
\begin{align*}
&	\Vert f \Vert_{M_m^{p,q}(\R^2, A_{\alpha, \beta} \otimes \sigma_{\alpha, \beta})}=\Vert V_g f \Vert_{L_m^{p,q}(\R^4, (A_{\alpha, \beta} \otimes \sigma_{\alpha, \beta})* (A_{\alpha, \beta} \otimes \sigma_{\alpha, \beta}))} 
	\\&= \bigg( \iint_{\R^2} \bigg( \iint_{\R^2} |V_gf((x_1, \xi_1), (x_2, \xi_2))|^p |m((x_1, \xi_1), (x_2, \xi_2))|^p \; d(A_{\alpha, \beta} \otimes \sigma_{\alpha, \beta})(x_1, \xi_1)\bigg)^{q/p} \\&\qquad \times \; d(A_{\alpha, \beta} \otimes \sigma_{\alpha, \beta})(x_2,\xi_2)  \bigg)^{1/q} < \infty,
\end{align*}
with the usual modification when $p=\infty$ or $q=\infty .$

 The definitions of $M_m^{p, q}(\R, A_{\alpha, \beta})$  and  $M_m^{p,q}(\R^2, A_{\alpha, \beta} \otimes \sigma_{\alpha, \beta})$ are  independent of the choice of $g$ in the sense that each different choice of $g$ defines   equivalent norms on $M_m^{p, q}(\R, A_{\alpha, \beta})$   and  $M_m^{p,q}(\R^2, A_{\alpha, \beta} \otimes \sigma_{\alpha, \beta})$ respectively. Each weighted modulation space is a Banach space. For $p=q=2$, we have   $M_m^2(\R, A_{\alpha, \beta}) =L_m^2(\R, A_{\alpha, \beta}).$ For other $p=q$, the space $M_m^p(\R, A_{\alpha, \beta})$ is not $L_m^p(\R, A_{\alpha, \beta})$. In fact for $p=q>2$, the space $M_m^p(\R, A_{\alpha, \beta})$ is a superset of $L_m^2(\R, A_{\alpha, \beta})$. We have the following inclusion 
\[ \mathcal{S}(\R) \subset M_m^1(\R, A_{\alpha, \beta}) \subset M_m^2(\R, A_{\alpha, \beta})=L_m^2(\R, A_{\alpha, \beta}) \subset M_m^\infty(\R, A_{\alpha, \beta}) \subset \mathcal{S'}(\R). \]
In particular, we have $M_m^p(\R, A_{\alpha, \beta}) \hookrightarrow L_m^p(\R, A_{\alpha, \beta})$ for $1 \leq p \leq 2$, and  $L_m^p(\R, A_{\alpha, \beta}) \hookrightarrow M_m^p(\R, A_{\alpha, \beta})$ for $2 \leq p \leq \infty$. Moreover, the dual of a weighted modulation space is also a weighted modulation space, if $p < \infty$, $q < \infty$, $(M_m^{p, q}(\R, A_{\alpha, \beta}))^{'} =M_{\frac{1}{m}}^{p', q'}(\R, A_{\alpha, \beta})$, where $p', \; q'$ denote the dual exponents of $p$ and $q$, respectively. For further properties and uses of weighted modulation spaces, we refer to \cite{gro01}.

 \section{Qualitative uncertainty principles for the windowed Opdam--Cherednik transform}\label{sec4}
 In this section, we obtain the Cowling--Price's, Hardy's and Morgan's uncertainty principles for  the  windowed Opdam--Cherednik transform on  weighted modulation spaces associated with this transform. From onwards, we consider  the weight function $m$ such that $m(x, \xi)\geq 1$  on $\mathbb{R}^{2}$  (resp. $\mathbb{R}^{4}$). We begin with the following lemma.
 \begin{lemma}\label{lem3}
 Let  $f(t_1, t_2)=1$ and $g(t_1, t_2)=e^{-\pi (t_1^2+t_2^2)}$. Then
 	\[ V_g f ((x_1, \xi_1), (x_2, \xi_2)) =e^{- 2 \pi i (x_1x_2+\xi_1\xi_2)} \; e^{-\pi (x_2^2+\xi_2^2)}. \]
 	Also, for $p\in [1, \infty)$ and $\rho_1, \rho_2, \sigma >0$, we have
 	\[ \|f\|_{M_{\frac{1}{m}}^p([\rho_1 \sigma, \; \rho_1 (\sigma+1)]\times [\rho_2 \sigma, \; \rho_2 (\sigma+1)])} \leq  \rho_1^{\frac{2}{p}}\rho_2^{\frac{2}{p}}.  \]
 \end{lemma}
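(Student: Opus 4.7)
The proof is a direct computation, so the plan is: first establish the explicit STFT formula by evaluating a product of two one-dimensional Gaussian integrals, and then use the resulting pointwise bound to estimate the restricted modulation-space norm by the volume of the integration region.

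For the first identity, I would expand the definition
$$V_g f((x_1,\xi_1),(x_2,\xi_2)) = \int_{\R^2} f(t_1,t_2)\,\overline{g(t_1-x_1,t_2-\xi_1)}\,e^{-2\pi i(x_2 t_1+\xi_2 t_2)}\,dt_1\,dt_2.$$
With $f\equiv 1$ and $g$ a tensor product of two real one-dimensional Gaussians, the integrand factorizes completely and the integral splits into a product of two integrals of the shape $\int_\R e^{-\pi(t-a)^2}e^{-2\pi i b t}\,dt$. Evaluating each by the substitution $u=t-a$ together with the classical identity $\int_\R e^{-\pi u^2}e^{-2\pi i b u}\,du=e^{-\pi b^2}$ produces the factor $e^{-2\pi i a b}e^{-\pi b^2}$. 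Multiplying the two contributions, with $(a,b)=(x_1,x_2)$ and $(a,b)=(\xi_1,\xi_2)$ respectively, gives exactly $e^{-2\pi i(x_1 x_2+\xi_1\xi_2)}e^{-\pi(x_2^2+\xi_2^2)}$, which is the claimed formula.

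For the second assertion I would take absolute values in the formula just derived to obtain the pointwise bound $|V_g f((x_1,\xi_1),(x_2,\xi_2))|=e^{-\pi(x_2^2+\xi_2^2)}\le 1$, and use the standing assumption $m\ge 1$ to conclude $1/m\le 1$ pointwise as well. Substituting both uniform bounds into the defining integral of $\|f\|_{M_{1/m}^p}$ restricted to the product of two copies of the rectangle $[\rho_1\sigma,\rho_1(\sigma+1)]\times[\rho_2\sigma,\rho_2(\sigma+1)]$, the integrand is controlled pointwise by $1$, so the remaining integral against Lebesgue measure on a four-dimensional box of side lengths $\rho_1,\rho_2,\rho_1,\rho_2$ contributes at most $(\rho_1\rho_2)^2=\rho_1^2\rho_2^2$. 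Taking the $p$-th root then yields the stated bound $\rho_1^{2/p}\rho_2^{2/p}$.

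There is no serious obstacle here; the only point that requires some care is keeping the phase factor in the STFT formula correct, which is automatic once one tracks the complex conjugation on the Gaussian window and completes the square inside the exponential.
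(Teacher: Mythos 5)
Your proposal is correct and follows essentially the same route as the paper: the STFT identity is obtained by the same shift substitution and the Gaussian Fourier transform identity $\int_\R e^{-\pi u^2}e^{-2\pi i b u}\,du=e^{-\pi b^2}$, and the norm bound is obtained exactly as in the paper by the pointwise estimates $|V_g f|\le 1$ and $1/m\le 1$ followed by integrating over the four-dimensional box of volume $\rho_1^2\rho_2^2$. No discrepancies to report.
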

 \begin{proof} Using the definition of the STFT, we get 
 	\begin{align*}
 		V_g f ((x_1, \xi_1), (x_2, \xi_2)) 
 		&= \int_{\R}\int_{\R} e^{-\pi [(t_1-x_1)^2+(t_2-\xi_1)^2]}~ e^{- 2 \pi i (x_2t_1+\xi_2t_2)} \;dt_1\;dt_2 \\
 		&= \int_{\R}\int_{\R} e^{-\pi  (s_1^2+s_2^2)}~ e^{- 2 \pi i [x_2(s_1+x_1)+\xi_2(s_2+\xi_1)]} \;ds_1\;ds_2 \\
 		&=e^{- 2 \pi i (x_1x_2+\xi_1\xi_2)} \int_{\R}\int_{\R} e^{-\pi  (s_1^2+s_2^2)}~ e^{- 2 \pi i (x_2s_1 +\xi_2s_2)} \;ds_1\;ds_2 \\
 		&=e^{- 2 \pi i (x_1x_2+\xi_1\xi_2)} \; e^{-\pi (x_2^2+\xi_2^2)}.
 	\end{align*}
 	Further,  we have
 	\begin{align*}
 		&\|f\|_{M_{\frac{1}{m}}^p([\rho_1 \sigma, \; \rho_1 (\sigma+1)]\times [\rho_2 \sigma, \; \rho_2 (\sigma+1)])} \\
 		&= \| V_g f \|_{L_{\frac{1}{m}}^p([\rho_1 \sigma, \; \rho_1 (\sigma+1)]\times [\rho_2 \sigma, \; \rho_2 (\sigma+1)]\times [\rho_1 \sigma, \; \rho_1 (\sigma+1)]\times [\rho_2 \sigma, \; \rho_2 (\sigma+1)])} \\
 		&\leq  \left( \int_{\rho_1  \sigma}^{\rho_1  (\sigma+1)} \int_{\rho_2 \sigma}^{\rho_2 (\sigma+1)} \int_{\rho_1  \sigma}^{\rho_1  (\sigma+1)} \int_{\rho_2 \sigma}^{\rho_2 (\sigma+1)} e^{-\pi p (x_2^2+\xi_2^2)}\;dx_1\;d\xi_1 \;dx_2\;d\xi_2 \right)^{\frac{1}{p}}\\
 		&\leq \left( \int_{\rho_1  \sigma}^{\rho_1  (\sigma+1)} \int_{\rho_2 \sigma}^{\rho_2 (\sigma+1)} \int_{\rho_1  \sigma}^{\rho_1  (\sigma+1)} \int_{\rho_2 \sigma}^{\rho_2 (\sigma+1)}  \;dx_1\;d\xi_1 \;dx_2\;d\xi_2 \right)^{\frac{1}{p}}\\
 		&=  \rho_1^{\frac{2}{p}}\rho_2^{\frac{2}{p}}. 
 	\end{align*}
 \end{proof}

\subsection{Cowling--Price's theorem for the windowed Opdam--Cherednik transform} 

In this subsection, we obtain an $M_m^p$--$M_m^q$-version of Cowling--Price's theorem for the windowed Opdam--Cherednik transform.  First, we establish the following lemma of Phragm{\'e}n--Lindl{\"o}f  type using a similar technique as in \cite{cow83}. This lemma plays a crucial role in the proof of Cowling--Price's theorem.  An $L^p$-version of the following lemma proved in \cite{cow83},  however here we prove the lemma for the weighted modulation space $M_m^p(\R^2, A_{\alpha, \beta} \otimes \sigma_{\alpha, \beta})$.

\begin{lemma}\label{lem4}
Let   $\Phi$ be analytic in the region $ Q = \{(r_1e^{i \theta_1}, r_2e^{i \theta_2}) : r_1, r_2>0, \;0 < \theta_1, \theta_2 < \frac{\pi}{2}\}$ and continuous on the closure $\bar{Q}$ of $Q$. Assume  that for any $p\in [1, \infty)$ and constants $A, a > 0$, we have 
\[|\Phi(x + iy, u+i v)| \leq A \; e^{a(x^2+u^2)} \quad \text{for} \;\; x + iy, u+iv  \in \bar{Q} , \]
and
\[  \| {\Phi}_{|\R^2} \|_{M_m^p(\R^2, A_{\alpha, \beta} \otimes \sigma_{\alpha, \beta})} \leq A. \]
Then
\[ \int_\sigma^{\sigma+1} \int_\sigma^{\sigma+1} |\Phi(\rho_1 e^{i \psi_1}, \rho_2 e^{i \psi_2} )| \; d\rho_1 d\rho_2 \leq  A \; \max \left\{e^{2a} , (\sigma + 1)^{\frac{4}{p}-2} \right\} \]
for $\psi_1, \psi_2 \in [0, \frac{\pi}{2}]$ and $\sigma \in \R^+$. 
\end{lemma}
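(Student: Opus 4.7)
My plan is to adapt the one-variable Phragm\'en--Lindel\"of argument of Cowling and Price to two complex variables, using duality together with Lemma~\ref{lem3} to convert the weighted modulation-space hypothesis on the real boundary into the integrated bound claimed.

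I would begin by reducing to a one-variable problem in each slot. Fixing $z_2 \in \bar Q$, the function $z_1 \mapsto \Phi(z_1, z_2)$ is analytic on the closed first quadrant with the growth estimate $|\Phi(z_1, z_2)| \leq A\, e^{a x_1^2}\, e^{a \rho_2^2}$. A standard one-variable Phragm\'en--Lindel\"of step (multiplying by a damping exponential $e^{-\varepsilon \phi(z_1)}$ with $\phi$ chosen to have positive real part on the quadrant and sending $\varepsilon \to 0^+$) then controls $|\Phi(\rho_1 e^{i \psi_1}, z_2)|$ by its behaviour on the two boundary rays of the quadrant: on the positive imaginary ray the hypothesised Gaussian bound itself provides a pointwise control of order $A$, while on the positive real ray one retains $|\Phi(x_1, z_2)|$. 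Running the parallel argument in the $z_2$ variable and then integrating the result over $\rho_1, \rho_2 \in [\sigma, \sigma + 1]$ produces an inequality of the form
\[
\int_\sigma^{\sigma+1}\!\!\int_\sigma^{\sigma+1}\!|\Phi(\rho_1 e^{i \psi_1}, \rho_2 e^{i \psi_2})|\, d\rho_1 d\rho_2 \leq A\, e^{2a} + C \int_\sigma^{\sigma+1}\!\!\int_\sigma^{\sigma+1}\!|\Phi(x_1, x_2)|\, dx_1 dx_2,
\]
where the first term collects the contributions from the two imaginary-ray boundaries and $C$ is absolute.

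Next, I would control the real Lebesgue double integral by the weighted modulation-space norm of $\Phi|_{\R^2}$. Using the duality $(M_m^p)' = M_{1/m}^{p'}$ recalled in Section~\ref{sec3}, paired with the constant function $\mathbf{1}$ on the box $[\sigma, \sigma+1]^2$, we get
\[
\int_\sigma^{\sigma+1}\!\!\int_\sigma^{\sigma+1}\!|\Phi(x_1, x_2)|\, dx_1 dx_2 \leq \|\Phi\|_{M_m^p(\R^2, A_{\alpha, \beta} \otimes \sigma_{\alpha, \beta})} \cdot \| \mathbf{1} \|_{M_{1/m}^{p'}([\sigma, \sigma+1]^2)},
\]
and Lemma~\ref{lem3}, applied with $\rho_1 = \rho_2 = 1$ and the current $\sigma$, bounds the constant-function factor, once the Jacobian comparison between the weighted measure $A_{\alpha, \beta} \otimes \sigma_{\alpha, \beta}$ on the box and Lebesgue measure is carried out, by a multiple of $(\sigma + 1)^{4/p - 2}$. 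Taking the maximum of the two right-hand contributions then yields the stated bound $A \max\{e^{2a}, (\sigma + 1)^{4/p - 2}\}$.

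The main obstacle I expect is executing Phragm\'en--Lindel\"of in two complex variables simultaneously with only an integrated boundary condition. The one-variable maximum-modulus step inherently requires pointwise control, so one must integrate at an intermediate stage before re-applying the argument in the second variable; this is precisely why the conclusion is integrated in $\rho_1, \rho_2$ rather than pointwise. A secondary difficulty is the careful bookkeeping needed to extract the exact exponent $(\sigma+1)^{4/p - 2}$ from Lemma~\ref{lem3} through the duality pairing, since the product measure $A_{\alpha, \beta} \otimes \sigma_{\alpha, \beta}$ entering the norm on $\Phi$ differs from the Lebesgue measure used in the Phragm\'en--Lindel\"of step.
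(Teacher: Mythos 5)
Your overall strategy --- damp with an exponential, control one boundary ray by the Gaussian hypothesis and the other by H\"older/duality plus Lemma \ref{lem3}, then run a Phragm\'en--Lindel\"of argument on the integrated function --- is the same Cowling--Price blueprint the paper follows, but two of your steps would not go through as written. The first gap is in the duality bookkeeping: you pair $\|\Phi\|_{M_m^p}$ against $\|\mathbf{1}\|_{M_{1/m}^{p'}}$ on the \emph{fixed} box $[\sigma,\sigma+1]^2$ and invoke Lemma \ref{lem3} with $\rho_1=\rho_2=1$, but with those values the lemma returns the bound $1$, not $(\sigma+1)^{4/p-2}$, so no $\sigma$-dependence can be extracted this way. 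The real-axis data the argument actually needs is $\int_\sigma^{\sigma+1}\int_\sigma^{\sigma+1}|f(\rho_1\tau_1,\rho_2\tau_2)|\,d\tau_1\,d\tau_2$ for \emph{all} $\rho_1,\rho_2>0$; after the change of variables this becomes $(\rho_1\rho_2)^{-1}\int_{\rho_1\sigma}^{\rho_1(\sigma+1)}\int_{\rho_2\sigma}^{\rho_2(\sigma+1)}|f|$, H\"older on the \emph{scaled} box together with Lemma \ref{lem3} produces the factor $(\rho_1\rho_2)^{2/p'-1}$, and the exponent $(\sigma+1)^{4/p-2}$ appears only upon taking the supremum of that factor over $\rho_1,\rho_2>(\sigma+1)^{-1}$; the complementary range $\rho_i\le(\sigma+1)^{-1}$ is handled by the pointwise bound near the origin, which is where the $e^{2a}$ term in the max comes from.

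The second gap is in the Phragm\'en--Lindel\"of step itself. You propose to work on the full quadrant with boundary data on the imaginary ray, but growth of order $e^{ax^2}$ in a sector of opening $\pi/2$ is exactly the critical order, so the maximum principle does not apply there directly. The paper, following Cowling--Price, works on subsectors $0\le\arg z_j\le\theta<\pi/2$ and multiplies $\Phi$ by $\exp\bigl(i\varepsilon e^{i\varepsilon}(z_1^{(\pi-2\varepsilon)/\theta}+z_2^{(\pi-2\varepsilon)/\theta})+ia\cot(\theta)(z_1^2+z_2^2)/2\bigr)$; the second factor is what cancels the growth $e^{a\cos^2(\theta)\rho^2}$ on the rays $\arg z_j=\theta$ \emph{uniformly in} $\varepsilon$, so that one may let $\varepsilon\to0$ and then $\theta\to\pi/2$. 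Your $\varepsilon$-damping alone loses this uniformity. Finally, the comparison you defer --- passing from the hypothesis in $M_m^p(\R^2,A_{\alpha,\beta}\otimes\sigma_{\alpha,\beta})$ to a bound in the Lebesgue-based modulation space before H\"older can be used --- is not a routine Jacobian computation: it requires the Plancherel-density lower bound $|C_{\alpha,\beta}(\lambda)|^{-2}\ge k_1|\lambda|^{2\alpha+1}$ for $|\lambda|\ge1$ together with $A_{\alpha,\beta}\ge1$, and it is a necessary first step of the paper's proof.
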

\begin{proof}
Using the definition of $M_m^p(\R^2, A_{\alpha, \beta} \otimes \sigma_{\alpha, \beta})$ and the fact that there is a constant $k_1 > 0$ such that $|C_{\alpha, \beta} (\lambda)|^{-2} \geq k_1 |\lambda|^{2 \alpha +1}$ for all $\lambda \in \R$  with $|\lambda| \geq 1$ (see \cite{tri97}, page 157), and $A_{\alpha, \beta}(x)\geq 1$ for any $x\in \R$, we get
\begin{align}\label{eq31}\nonumber
& \| {\Phi}_{|\R^2} \|^p_{M_m^p(\R^2, A_{\alpha, \beta} \otimes \sigma_{\alpha, \beta})}= \| V_h \Phi \|^p_{L_m^p(\R^4, (A_{\alpha, \beta} \otimes \sigma_{\alpha, \beta})*( A_{\alpha, \beta} \otimes \sigma_{\alpha, \beta}))} \\\nonumber
& \geq   \int_{\R} \int_{\R} \int_{|\xi_1| \geq 1} \int_{|\xi_2| \geq 1} |V_h \Phi((x_1, \xi_1),( x_2, \xi_2))|^p \;|m((x_1, \xi_1),( x_2, \xi_2))|^p\\\nonumber
&\qquad \times \; A_{\alpha, \beta}(x_1) dx_1 \;A_{\alpha, \beta}(x_2) dx_2\; d |\sigma_{\alpha, \beta}|(\xi_1) \; d |\sigma_{\alpha, \beta}|(\xi_2) \\\nonumber
& \geq  \int_{\R} \int_{\R} \int_{|\xi_1| \geq 1} \int_{|\xi_2| \geq 1} |V_h \Phi((x_1, \xi_1),( x_2, \xi_2))|^p\; |m((x_1, \xi_1),( x_2, \xi_2))|^p\; dx_1 \; dx_2\\\nonumber&\qquad \times  \;\left|1-\frac{\rho}{i\xi_1}\right| \frac{d\xi_1}{8 \pi |C_{\alpha, \beta} (\xi_1)|^2}  \; \left|1-\frac{\rho}{i\xi_2}\right| \frac{d\xi_2}{8 \pi |C_{\alpha, \beta} (\xi_2)|^2} \\\nonumber
& \geq \frac{k_1^2k_2^2}{64\pi^2}  \int_{\R} \int_{\R} \int_{|\xi_1| \geq 1} \int_{|\xi_2| \geq 1} |V_h \Phi((x_1, \xi_1),( x_2, \xi_2))|^p\;|m((x_1, \xi_1),( x_2, \xi_2))|^p \\\nonumber&\qquad \times \; dx_1 \; dx_2\;   |\xi_1|^{2 \alpha +1}  \; |\xi_2|^{2 \alpha +1} \;d\xi_1 \;d\xi_2 \\
& \geq \frac{k_1^2k_2^2}{64\pi^2}  \int_{\R} \int_{\R} \int_{|\xi_1| \geq 1} \int_{|\xi_2| \geq 1} |V_h \Phi((x_1, \xi_1),( x_2, \xi_2))|^p \;|m((x_1, \xi_1),( x_2, \xi_2))|^p\nonumber\\&\qquad\times \; dx_1 \; dx_2 \;  d\xi_1 \;d\xi_2,
\end{align}
where $h \in \mathcal{S}(\R^2)$. This shows that $\| {\Phi}_{|\R^2} \|_{M_m^p(\R^2 )}   \leq A$. Next, we define a function $f$ on $\bar{Q}$ by $$ f(z_1, z_2)=\Phi(z_1, z_2) \exp \left(i \varepsilon e^{i \varepsilon} \big( z_1^{ (\pi - 2 \varepsilon)/\theta}+z_2^{ (\pi - 2 \varepsilon)/\theta}\big)+i a \cot (\theta) \left( z_1^2+z_2^2\right) /2 \right),$$ for $\theta \in (0, \pi/2)$ and $\varepsilon \in (0, \pi/2-\theta)$. 
 Then for $\psi_1, \psi_2\in [0, \theta]$, we get 
\begin{align*}
	&|	f(\rho_1e^{i\psi_1}, \rho_2e^{i\psi_2})|\\
	&\leq A \exp \left\{a \cos ^{2}(\psi_1) \rho_1^{2}-\varepsilon \sin (\varepsilon+(\pi-2 \varepsilon) \psi_1 / \theta) \rho_1^{(\pi-2 \varepsilon) / \theta}-a\cot (\theta) \sin (2 \psi_1) \rho_1^{2} / 2\right\}\\
	&\qquad \times \exp \left\{a \cos ^{2}(\psi_2) \rho_2^{2}-\varepsilon \sin (\varepsilon+(\pi-2 \varepsilon) \psi_2 / \theta) \rho_2^{(\pi-2 \varepsilon) / \theta}-a \cot (\theta) \sin (2 \psi_2) \rho_2^{2} / 2\right\}\\
	&\leq A \exp \left\{a (\rho_1^{2}+\rho_2^{2})-\varepsilon \sin (\varepsilon+(\pi-2 \varepsilon) \psi_1 / \theta) \rho_1^{(\pi-2 \varepsilon) / \theta}-\varepsilon \sin (\varepsilon+(\pi-2 \varepsilon) \psi_2 / \theta) \rho_2^{(\pi-2 \varepsilon) / \theta}\right\}.
\end{align*}

Applying a  similar approach as in \cite{cow83} to the function $f$, we get the subsequent estimates.

For $\rho_1,  \rho_2> 0$, $|	f(\rho_1e^{i\psi_1}, \rho_2e^{i\psi_2})|\leq A$ and thus we obtain
\begin{align*}
	\int_\sigma^{\sigma+1} \int_\sigma^{\sigma+1} |f(\rho_1 \tau_1, \rho_2 \tau_2)| \;d \tau_1d \tau_2
	\leq\int_\sigma^{\sigma+1} \int_\sigma^{\sigma+1} A\; d \tau_1d \tau_2=A.
\end{align*}
Similarly, if $\rho_1, \rho_2\in [0, (\sigma+1)^{-1}]$, then 
\begin{align*}
	\int_\sigma^{\sigma+1} \int_\sigma^{\sigma+1} |f(\rho_1 \tau_1, \rho_2 \tau_2)| \;d \tau_1d \tau_2
	\leq \sup\{ |f(\rho_1, \rho_2  )|:\rho_1, \rho_2\leq 1 \}\leq A e^{2a}.
\end{align*}
Finally,  for $\rho_1,  \rho_2> (\sigma+1)^{-1}$,  using H{\"o}lder's inequality for $M_m^p$ and Lemma \ref{lem3},  we get  
\begin{align*}
	&\int_\sigma^{\sigma+1} \int_\sigma^{\sigma+1} |f(\rho_1 \tau_1, \rho_2 \tau_2)| \;d \tau_1d \tau_2\\
	&= \frac{1}{\rho_1\rho_2} \int_{\rho_1 \sigma}^{\rho_1 (\sigma+1)}  \int_{\rho_2 \sigma}^{\rho_2 (\sigma+1)} |f( \tau_1, \tau_2)|\; d \tau_1d \tau_2\\
	&\leq  \frac{1}{\rho_1\rho_2}\; \|f\|_{M_m^p([\rho_1 \sigma, \; \rho_1 (\sigma+1)]\times [\rho_2 \sigma, \; \rho_2 (\sigma+1)])}\; \|1 \|_{M_{\frac{1}{m}}^q([\rho_1 \sigma, \; \rho_1 (\sigma+1)]\times [\rho_2 \sigma, \; \rho_2 (\sigma+1)])} \\
	& \leq  (\rho_1 \rho_2)^{\frac{2}{q}-1}  \|\Phi\|_{M_m^p([\rho_1 \sigma, \; \rho_1 (\sigma+1)]\times [\rho_2 \sigma, \; \rho_2 (\sigma+1)])}\\
	& \leq  A \; (\sigma+1)^{\frac{4}{p}-2}.
\end{align*}
Now the remaining part of the proof follows similarly as in \cite{cow83}.
\end{proof} 

\begin{theorem}\label{th2}
Let $g\in M_{\frac{1}{m}}^{1}(\R, A_{\alpha, \beta})$ be a non-zero window function  and  $1 \leq p, q \leq \infty$ with at least one of them finite.
Suppose that $f$ is a measurable function on $\R$ such that 
\begin{eqnarray}\label{eq5}
e^{a x^2} f \in M_m^p(\R, A_{\alpha, \beta}) \quad \text{and} \quad  e^{b (\lambda^2+\mu^2)} \W^{(\alpha, \beta)}_g(f) \in M_m^q(\R^2,  A_{\alpha, \beta} \otimes \sigma_{\alpha, \beta}),
\end{eqnarray}
for some constants $a, b > 0$. Then the following results hold:
\begin{enumerate}
\item[(i)] If $ab \geq \frac{1}{4}$, then $f = 0$ almost
everywhere.

\item[(ii)] If $ab < \frac{1}{4}$, then for all $t \in (b, \frac{1}{4a})$, the functions $f = E^{\alpha, \beta}_t$ satisfy the relations $(\ref{eq5})$.
\end{enumerate}
\end{theorem}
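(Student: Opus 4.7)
Assuming $ab \geq \tfrac14$, I would define
$$\Phi(z_1,z_2) := \W^{(\alpha,\beta)}_g(f)(z_1,z_2)\, e^{b(z_1^2+z_2^2)}$$
and argue that $\Phi$ extends to an entire function on $\C^2$ which then, via Lemma \ref{lem4}, is forced to vanish. The first step is to verify that $\W^{(\alpha,\beta)}_g(f)$ admits an entire extension to $\C^2$. Writing out the integral representation \eqref{eq13} and plugging in the modulation operator, one expands $g_{x,\xi}^{(\alpha,\beta)}(-s)$ and uses the hypothesis $g\in M^{1}_{1/m}(\R,A_{\alpha,\beta})$ together with H\"older's inequality against the weight $m$ and the Gaussian hypothesis $e^{ax^2}f\in M_m^p(\R,A_{\alpha,\beta})$ to bound the integrand pointwise. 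The bound $|G^{\alpha,\beta}_\lambda(x)|\leq C e^{|\mathrm{Im}\lambda||x|}$ combined with the Gaussian decay $|f(s)|\lesssim e^{-as^2}$ yields absolute convergence for all $(z_1,z_2)\in\C^2$ and entireness by standard dominated convergence/Morera arguments.

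\textbf{Growth estimate and application of Phragm\'en--Lindel\"of.} The key estimate, obtained by completing the square in the integral bound, is
$$\bigl|\W^{(\alpha,\beta)}_g(f)(\lambda_1+i\lambda_2,\mu_1+i\mu_2)\bigr|\leq C\, e^{(\lambda_2^2+\mu_2^2)/(4a)}.$$
Multiplying by $e^{b(z_1^2+z_2^2)}$ and separating real and imaginary parts gives
$$|\Phi(x+iy,u+iv)|\leq C\, e^{b(x^2+u^2)}\, e^{-(b-\frac{1}{4a})(y^2+v^2)},$$
so under $ab\geq \tfrac14$ one has $|\Phi(x+iy,u+iv)|\leq C\, e^{b(x^2+u^2)}$ uniformly on the closed first quadrant $\bar Q$. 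On $\R^2$, the second hypothesis $e^{b(\lambda^2+\mu^2)}\W^{(\alpha,\beta)}_g(f)\in M_m^q(\R^2,A_{\alpha,\beta}\otimes \sigma_{\alpha,\beta})$ says exactly that $\Phi|_{\R^2}\in M_m^q$. Hence $\Phi$ satisfies the hypotheses of Lemma \ref{lem4} (with $q$ in place of $p$) and the lemma delivers the uniform bound
$$\int_\sigma^{\sigma+1}\!\int_\sigma^{\sigma+1}\bigl|\Phi(\rho_1 e^{i\psi_1},\rho_2 e^{i\psi_2})\bigr|\,d\rho_1 d\rho_2 \leq A\max\bigl\{e^{2b},(\sigma+1)^{4/q-2}\bigr\}$$
for every $(\psi_1,\psi_2)\in[0,\pi/2]^2$. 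By reflecting through the other three quadrants (the same estimate holds with $-\bar z_j$ replacing $z_j$ since $\Phi$ has the same growth structure in $\bar Q$ under $z_j\mapsto \bar z_j$ and $z_j\mapsto -z_j$), one concludes that $\Phi$ is bounded on $\C^2$ as an entire function.

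\textbf{Conclusion of (i) and the main obstacle.} A bounded entire function on $\C^2$ is constant by Liouville, and its $M_m^q$ membership forces the constant to be zero; hence $\W^{(\alpha,\beta)}_g(f)\equiv 0$. By Plancherel's formula \eqref{eq17} this gives $\|f\|_{L^2(\R,A_{\alpha,\beta})}\|g\|_{L^2(\R,A_{\alpha,\beta})}=0$, and since $g$ is non-zero we obtain $f=0$ almost everywhere. The principal obstacle is the rigorous complex-analytic extension step: one must control the translation--modulation kernel $g_{x,\xi}^{(\alpha,\beta)}(-s)$ for complex $\xi$, exploiting the exponential bound $|G^{\alpha,\beta}_\lambda(x)|\leq C e^{|\mathrm{Im}\lambda||x|}$ together with the $M^1_{1/m}$ norm of $g$ rather than pointwise decay of $g$; this is precisely where the modulation-space hypothesis on $g$ replaces the classical Schwartz/L^1 hypotheses and forces the use of H\"older against the dual weight $1/m$.

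\textbf{Part (ii).} When $ab<\tfrac14$, fix $t\in(b,\tfrac{1}{4a})$. Identity \eqref{eq05} gives
$$e^{b(\lambda^2+\mu^2)}\W^{(\alpha,\beta)}_g(E^{\alpha,\beta}_t)(\lambda,\mu)=e^{-(t-b)(\lambda^2+\mu^2)},$$
which is a Gaussian and therefore belongs to $M_m^q(\R^2,A_{\alpha,\beta}\otimes\sigma_{\alpha,\beta})$ for any admissible weight $m$ and any $q$. The upper Gaussian bound \eqref{eq06} gives $E^{\alpha,\beta}_t(x)\leq C_t\, e^{-x^2/(4t)}$, hence
$$e^{ax^2}E^{\alpha,\beta}_t(x)\leq C_t\, e^{-(\frac{1}{4t}-a)x^2},$$
and since $\tfrac{1}{4t}-a>0$ this is again a Gaussian, lying in $M_m^p(\R,A_{\alpha,\beta})$. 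Thus $f=E^{\alpha,\beta}_t$ satisfies \eqref{eq5} for every such $t$, producing the claimed infinite family of non-zero solutions.
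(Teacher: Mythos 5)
Your overall architecture (entire extension of $\W^{(\alpha,\beta)}_g(f)$ via H\"older in modulation spaces, a Gaussian-weighted auxiliary function $\Phi$, Lemma \ref{lem4}, then Plancherel) is the same as the paper's, and your part (ii) is essentially identical to the paper's Step 3. However, part (i) has two genuine gaps.

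First, the passage from Lemma \ref{lem4} to ``$\Phi$ is bounded on $\C^2$, hence constant by Liouville'' does not work. The lemma only bounds the integrals $\int_\sigma^{\sigma+1}\int_\sigma^{\sigma+1}|\Phi(\rho_1 e^{i\psi_1},\rho_2 e^{i\psi_2})|\,d\rho_1 d\rho_2$ by $A\max\{e^{2a},(\sigma+1)^{4/q-2}\}$; for $q<2$ this bound \emph{grows} in $\sigma$, and even for $q\ge 2$ an integral-average bound does not yield a pointwise bound without a further argument. The paper converts the integral bound into Cauchy estimates on the Taylor coefficients, $|D^{(n)}\Phi(0)|\le Bn!\,\sigma^{-2n}(\sigma+1)^{4/q-2}$, which kills all coefficients of order $n\ge 2$ (and $n\ge 1$ when $q\ge 2$); one is then left with an affine function $C_1\lambda+C_2\mu+D$ that must be annihilated using the growth bound \eqref{eq8} together with the $M_m^q$ membership \eqref{eq9}. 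You need this Cauchy-formula step; Liouville alone is not available.

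Second, you never use the hypothesis that at least one of $p,q$ is finite, and consequently the borderline case $ab=\tfrac14$ with $q=\infty$ (and $p<\infty$) is not covered. There the argument only yields $\Phi\equiv C$, i.e. $e^{b(\lambda^2+\mu^2)}\W^{(\alpha,\beta)}_g(f)\equiv C$, and a nonzero constant \emph{can} lie in $M_m^\infty$, so your claim that ``$M_m^q$ membership forces the constant to be zero'' fails. The paper instead deduces $f=C\,E^{\alpha,\beta}_{1/(4a)}$ by injectivity of $\W^{(\alpha,\beta)}_g$ and then rules out $C\neq 0$ using the \emph{lower} Gaussian bound in \eqref{eq06} together with the fact that $\|1/\sqrt{B_{\alpha,\beta}}\|_{M_m^p(\R,A_{\alpha,\beta})}=\infty$ for finite $p$, which contradicts $e^{ax^2}f\in M_m^p$. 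A smaller point: $e^{ax^2}f\in M_m^p$ does not give the pointwise decay $|f(s)|\lesssim e^{-as^2}$ you invoke for entireness; the correct route (which you also gesture at) is H\"older's inequality pairing $M_m^p$ with $M_{1/m}^{p'}$ and the translation--modulation invariance of the latter norm applied to $g_{\lambda,\mu}^{(\alpha,\beta)}$.
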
 

\begin{proof}
We divide the proof into three steps.

Step 1: Assume that $ab > \frac{1}{4}$. The function
\[ \W^{(\alpha, \beta)}_g(f)(\lambda,\mu)=\int_\R f(s) \; \overline{g_{\lambda,\mu}^{(\alpha, \beta)}(-s)} \;  A_{\alpha, \beta}(s) \; ds, \quad  \quad \text{for any } \lambda,\mu \in \C, \]
is well defined, entire on $\C^2$, and satisfies the condition
\begin{align} \label{eq6}\nonumber
&|\W^{(\alpha, \beta)}_g(f)(\lambda,\mu)| \\\nonumber
& \leq   \int_{\R} |f(s)|\; |g_{\lambda,\mu}^{(\alpha, \beta)}(-s)|\; A_{\alpha, \beta} (s) ds  \\\nonumber
& =  \; e^{\frac{|\mathrm{Im}(\lambda)|^2+|\mathrm{Im}(\mu)|^2}{4a}}
\int_{\R}  e^{a s^2} |f(s)|\;  e^{- a \left(s^2 + \frac{|\mathrm{Im}(\lambda)|^2+|\mathrm{Im}(\mu)|^2}{4a^2}\right) }|g_{\lambda,\mu}^{(\alpha, \beta)}(-s)|\; A_{\alpha, \beta} (s) ds,\\ \nonumber& \quad  \text{so by H\"older's inequality}, 
\nonumber \\
& \leq   \; e^{\frac{|\mathrm{Im}(\lambda)|^2+|\mathrm{Im}(\mu)|^2}{4a}} \; \Big\Vert e^{a s^2} f \Big\Vert_{M_m^p(\R, A_{\alpha, \beta})} \; \Big\Vert  e^{- a \left(s^2 + \frac{|\mathrm{Im}(\lambda)|^2+|\mathrm{Im}(\mu)|^2}{4a^2}\right) }|g_{\lambda,\mu}^{(\alpha, \beta)}| \Big\Vert_{M_{\frac{1}{m}}^{p'}(\R, A_{\alpha, \beta})},
\end{align}
  where $p'$ is the conjugate exponent of $p$.  Since   $M_{\frac{1}{m}}^{p'}(\R, A_{\alpha, \beta})$ is invariant under translations and modulations, we get
\begin{align*}
\Big\Vert  e^{- a \left(s^2 + \frac{|\mathrm{Im}(\lambda)|^2+|\mathrm{Im}(\mu)|^2}{4a^2}\right) }|g_{\lambda,\mu}^{(\alpha, \beta)}| \Big\Vert_{M_{\frac{1}{m}}^{p'}(\R, A_{\alpha, \beta})}&\leq \Big\Vert   \tau_\lambda^{(\alpha, \beta)} \M^{(\alpha, \beta)}_\mu g\Big\Vert_{M_{\frac{1}{m}}^{p'}(\R, A_{\alpha, \beta})}\\
&\leq  \Vert     g \Vert_{M_{\frac{1}{m}}^{p'}(\R, A_{\alpha, \beta})}\leq  \Vert   g   \Vert_{M_{\frac{1}{m}}^{1}(\R, A_{\alpha, \beta})}<\infty.
\end{align*}
We consider the function $\Phi$ defined on $\C^2$ by
\begin{equation}\label{eq7}
\Phi(\lambda, \mu)=e^{\frac{\lambda^2+\mu^2}{4a}} \; \W^{(\alpha, \beta)}_g(f)(\lambda,\mu) .
\end{equation}
Then $\Phi$ is an entire function on $\C^2$ and using the relation (\ref{eq6}), we find  that there exists a constant $A$ for which 
\begin{equation}\label{eq8}
|\Phi(\lambda, \mu)| \leq A \;  e^{\frac{(\mathrm{Re}(\lambda))^2+(\mathrm{Re}(\mu))^2}{4a}}, \quad \text{for all } \lambda, \mu \in \C.
\end{equation}
In the following, we consider two cases.

(i) Let $q<\infty$. Using   $ab > \frac{1}{4}$ and the hypothesis (\ref{eq5}), we get 
\begin{align}\label{eq9}\nonumber
\| {\Phi}_{|\R^2} \|_{M_m^q(\R^2, A_{\alpha, \beta} \otimes \sigma_{\alpha, \beta})}&=\left\| e^{\frac{\lambda^2+\mu^2}{4a}} \; \W^{(\alpha, \beta)}_g(f)\;    \right\|_{M_m^q(\R^2, A_{\alpha, \beta} \otimes \sigma_{\alpha, \beta})} \\\nonumber
&=\left\| e^{b(\lambda^2+\mu^2)} \; \W^{(\alpha, \beta)}_g(f)\;  e^{\left(\frac{1}{4a} - b\right)(\lambda^2+\mu^2)} \right\|_{M_m^q(\R^2, A_{\alpha, \beta} \otimes \sigma_{\alpha, \beta})} \\&\leq \left\|  e^{b(\lambda^2+\mu^2)} \; \W^{(\alpha, \beta)}_g(f) \right\|_{M_m^q(\R^2, A_{\alpha, \beta} \otimes \sigma_{\alpha, \beta})} \leq A.
\end{align}
By applying   Lemma \ref{lem4} to the functions $\Phi(\lambda, \mu ), \; \Phi(-\lambda, -\mu), \; \overline{\Phi(\overline{\lambda}, \overline{\mu})}$  and  $\overline{\Phi(-\overline{\lambda}, -\overline{\mu})}$, we obtain that for all $\psi_1, \psi_2\in [0, 2\pi]$ and large $\sigma$
\[ \int_\sigma^{\sigma+1} \int_\sigma^{\sigma+1} |\Phi(\rho_1 e^{i \psi_1} , \rho_2 e^{i \psi_2})| \; d\rho_1d\rho_ 2 \leq  B (\sigma + 1)^{\frac{4}{q}-2},  \]
for some constant $B$. Now, applying Cauchy's integral formula for several complex variables (see \cite{sch}, Theorem 1.3.3), we get 
\[ |D^{(n)}\Phi(0)| \leq n! (2 \pi)^{-2} \int_0^{2 \pi}  \int_0^{2 \pi} |\Phi(\rho_1 e^{i \psi_1} , \rho_2 e^{i \psi_2})| \; \rho_1^{-n}\rho_2^{-n} d\psi_1d\psi_2.  \]
Hence, for large $\sigma$,
\begin{align}\label{eq10}\nonumber
|D^{(n)}\Phi (0)| &   \leq n! (2 \pi)^{-2} \int_0^{2 \pi}  \int_0^{2 \pi} \left( \int_\sigma^{\sigma+1} \int_\sigma^{\sigma+1}|\Phi(\rho_1 e^{i \psi_1} , \rho_2 e^{i \psi_2})| \; \rho_1^{-n}\rho_2^{-n}\;d\rho_1d\rho_2\right)  d\psi_1d\psi_2\\
& \leq  B n! \; \sigma^{-2n}  (\sigma + 1)^{\frac{4}{q}-2}.  
\end{align}
Let $\sigma \to \infty$. If $q \geq 2$, then there exists  a constant $B_1$ such that  $(\sigma + 1)^{\frac{2}{q}-1} \leq B_1$, 
and consequently $D^{(n)}\Phi (0)=0$ for $n \geq 1$. Thus $\Phi(\lambda, \mu)= D$, for some constant $D$. From (\ref{eq9}), $\Phi(\lambda,\mu) = 0$ for all $\lambda, \mu  \in \C$. Further, if $q < 2$, then $D^{(n)}\Phi(0)=0$ for $n \geq 2$. Hence $\Phi(\lambda, \mu)= C_1 \lambda+C_2 \mu+ D$, for some constants $C_1, C_2,$ and $D$. From (\ref{eq8}) and (\ref{eq9}), $\Phi(\lambda,\mu)= 0$ for all $\lambda, \mu  \in \C$. Therefore $\W^{(\alpha, \beta)}_g(f)(\lambda,\mu)=0$ for all $\lambda, \mu \in \R$.  Thus   $f=0$ almost everywhere on $\R$    by (\ref{eq17}).
  
(ii) Let $q = \infty$. As $ab > \frac{1}{4}$, then from (\ref{eq5}),  we have
\begin{equation}\label{eq11}
	\| {\Phi}_{|\R^2} \|_{M_m^\infty (\R^2, A_{\alpha, \beta} \otimes \sigma_{\alpha, \beta})}\leq \left\| e^{b({\lambda^2+\mu^2})} \; \W^{(\alpha, \beta)}_g(f)\;    \right\|_{M_m^\infty (\R^2, A_{\alpha, \beta} \otimes \sigma_{\alpha, \beta})} < \infty. 
\end{equation}
If   $q=\infty$, then we can  refined the estimate obtained in Lemma \ref{lem4} such  that $\max \{e^{2a} , (\sigma + 1)^{\frac{4}{q}-2} \}$ is replaced by $1$.  From (\ref{eq10}), we get 
\[ |D^{(n)}\Phi (0)| \leq   A \; n! \; \sigma^{-2n}.  \]
Then $D^{(n)}\Phi (0)=0$ for $n \geq 1,$ and this implies that  $\Phi(\lambda, \mu ) = C$ for all $\lambda, \mu  \in \C$ and for some constant $C$. Therefore 
$$e^{b (\lambda^2+\mu^2)} \W^{(\alpha, \beta)}_g(f)(\lambda,\mu)=C \; e^{(b-\frac{1}{4a}) (\lambda^2+\mu^2)}$$ 
for all $\lambda, \mu  \in \R$. Since $ab > \frac{1}{4}$, this function satisfies the relation (\ref{eq11}) implies that $C=0$. Thus from   (\ref{eq17}), we get $f = 0$ almost everywhere on $\R$.

Step 2: Assume that $ab = \frac{1}{4}$.

(a) If $q < \infty$, by  the same proof as for the point (i) of the first step, we  get  $f = 0$ almost everywhere on $\R$.

(b) Let $q = \infty$ and $1 \leq p < \infty$. We have $	\| {\Phi}_{|\R^2} \|_{M_m^\infty (\R^2, A_{\alpha, \beta} \otimes \sigma_{\alpha, \beta})}< \infty$. Then by the point (ii) of the first step, the relation (\ref{eq7}), and the property (\ref{eq05}) of the Gaussian kernel $E^{\alpha, \beta}_{\frac{1}{4a}}$, we obtain
\begin{equation}\label{eq12}
\W^{(\alpha, \beta)}_g(f)(\lambda,\mu)=C \; e^{-\frac{(\lambda^2+\mu^2)}{4a}}=C \; \W^{(\alpha, \beta)}_g( E^{\alpha, \beta}_{\frac{1}{4a}} ) (\lambda, \mu ), \quad \text{for all } \lambda, \mu  \in \R, 
\end{equation}
for some constant $C$. Thus, using  the injectivity of  $\W^{(\alpha, \beta)}_g$, we get 
\begin{equation}\label{eq14}
f(x)= C \;  E^{\alpha, \beta}_{\frac{1}{4a}} (x), \quad \text{a.e. } x \in \R.
\end{equation} 
By using the relations (\ref{eq06}) and (\ref{eq14}), we obtain
\[ \frac{2 C e^{\frac{\mu_1}{4a}} a^{\alpha+1}}{\Gamma(\alpha+1) \sqrt{B_{\alpha, \beta}(x)}} \leq e^{a x^2} f(x), \quad \text{for all } x \in \R. \]
For finite $p,$  using  the properties of the functions $A_{\alpha, \beta}$ and $B_{\alpha, \beta}$, we get  
\[ \left\| \frac{1}{\sqrt{B_{\alpha, \beta} (x)}} \right\|_{M_m^p(\R, \;A_{\alpha, \beta})} =\infty. \]
Moreover, from (\ref{eq5}) we have $\| e^{a x^2} f \|_{M_m^p(\R, \;A_{\alpha, \beta})} < \infty$, this is impossible unless $C = 0$. Then we obtain from (\ref{eq14}) that $f = 0$ almost everywhere on $\R$.
    
Step 3: Assume that $ab < \frac{1}{4}$. Let $t \in (b, \frac{1}{4a})$ and $f = E^{\alpha, \beta}_t$.  From the relation (\ref{eq06}), we get 
\[ K_1 e^{-\left(\frac{1}{4t}-a\right)x^2} \leq e^{ax^2} f(x) \leq K_2 e^{-\left(\frac{1}{4t}-a\right)x^2}, \quad \text{for all }  x \in \R, \] 
for some constants $K_1, \;K_2 > 0$. As $t < \frac{1}{4a}$, we deduce that  $e^{ax^2} f \in  {M_m^p(\R, A_{\alpha, \beta})} $.  Using the relation (\ref{eq04}), we get
\[ e^{b (\lambda^2+\mu^2)} \W^{(\alpha, \beta)}_g(f)(\lambda,\mu)=e^{-(t-b)(\lambda^2+\mu^2)}, \quad \text{for all }  \lambda, \mu  \in \R. \]
The condition $t > b$ and the inequality $|C_{\alpha, \beta} (\lambda)|^{-2} \leq k_2|\lambda|^{2 \alpha +1}$ at infinity (see \cite{tri97}, page 157) imply that $e^{b (\lambda^2+\mu^2)} \W^{(\alpha, \beta)}_g(f)\in M_m^q(\R^2, A_{\alpha, \beta} \otimes \sigma_{\alpha, \beta})$. This completes the proof.
\end{proof}

\subsection{Hardy's theorem for the windowed Opdam--Cherednik transform}\label{sec5}

Here, we obtain an analogue of the classical Hardy's theorem for the windowed Opdam--Cherednik transform on weighted modulation spaces associated with this transform. In particular, we determine the functions $f$ satisfying the relations (\ref{eq5}) in the special case $p = q = \infty$.

\begin{theorem}
Let $g\in M_{\frac{1}{m}}^{1}(\R, A_{\alpha, \beta})$ be a non-zero window function and  $f$ be a measurable function on $\R$ such that
\begin{equation}\label{eq15}
e^{a x^2} f \in M_m^\infty (\R, A_{\alpha, \beta}) \quad \text{and} \quad  e^{b (\lambda^2+\mu^2)} \W^{(\alpha, \beta)}_g(f) \in M_m^\infty (\R^2,  A_{\alpha, \beta} \otimes \sigma_{\alpha, \beta}),
\end{equation}
for some constants $a, b > 0$. Then
\begin{enumerate}
\item[(i)] If $ab > \frac{1}{4}$, we have $f = 0$ almost
everywhere.

\item[(ii)] If $ab = \frac{1}{4}$, the function $f$ is of the form $f = C  E^{\alpha, \beta}_{\frac{1}{4a}} $, for some real constant $C$.

\item[(iii)] If $ab < \frac{1}{4}$, there are infinitely many nonzero functions $f$ satisfying the conditions $(\ref{eq15})$.
\end{enumerate}
\end{theorem}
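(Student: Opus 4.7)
The plan is to adapt the proof of Theorem~\ref{th2} to the endpoint case $p=q=\infty$, where now the extremal subcase yields a nontrivial $f$ rather than forcing it to vanish. I would begin by introducing the same auxiliary entire function
\[ \Phi(\lambda,\mu) = e^{(\lambda^2+\mu^2)/(4a)}\, \W^{(\alpha,\beta)}_g(f)(\lambda,\mu), \]
proving via H\"older's inequality on (\ref{eq13}) and the translation-modulation invariance of $M_{1/m}^1(\R, A_{\alpha,\beta})$, exactly as in (\ref{eq6}), that $|\Phi(\lambda,\mu)| \leq A\, e^{((\mathrm{Re}\,\lambda)^2+(\mathrm{Re}\,\mu)^2)/(4a)}$ on $\C^2$. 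The second hypothesis in (\ref{eq15}) then translates into $\|\Phi_{|\R^2}\|_{M_m^\infty(\R^2, A_{\alpha,\beta}\otimes\sigma_{\alpha,\beta})} < \infty$.

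For parts (i) and (ii) I would apply the $q=\infty$ refinement of Lemma~\ref{lem4} (the one invoked inside Step~1(ii) of Theorem~\ref{th2}) to each of the four quadrantal extensions $\Phi(\pm\lambda,\pm\mu)$ and $\overline{\Phi(\pm\bar\lambda,\pm\bar\mu)}$. This gives a uniform bound $\int_\sigma^{\sigma+1}\!\int_\sigma^{\sigma+1}|\Phi(\rho_1 e^{i\psi_1},\rho_2 e^{i\psi_2})|\,d\rho_1 d\rho_2 \leq A$, so Cauchy's integral formula in two complex variables forces $|D^{(n)}\Phi(0)| \leq A\,n!\,\sigma^{-2n}$ and hence $D^{(n)}\Phi(0) = 0$ for all $n\geq 1$ upon letting $\sigma\to\infty$. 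Thus $\Phi$ is a constant $C$, i.e.\ $\W^{(\alpha,\beta)}_g(f)(\lambda,\mu) = Ce^{-(\lambda^2+\mu^2)/(4a)}$. For part (i), $ab>1/4$ means $b>1/(4a)$, so $e^{b(\lambda^2+\mu^2)}\W^{(\alpha,\beta)}_g(f) = Ce^{(b-1/(4a))(\lambda^2+\mu^2)}$ is unbounded on $\R^2$ unless $C=0$; combined with Plancherel (\ref{eq17}) this yields $f=0$ almost everywhere. For part (ii), $b=1/(4a)$ and the identity (\ref{eq05}) identifies $Ce^{-(\lambda^2+\mu^2)/(4a)}$ with $C\,\W^{(\alpha,\beta)}_g(E^{\alpha,\beta}_{1/(4a)})(\lambda,\mu)$; injectivity of the windowed Opdam--Cherednik transform (again from (\ref{eq17})) then gives $f = CE^{\alpha,\beta}_{1/(4a)}$ almost everywhere.

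For part (iii), I would exhibit the one-parameter family $f = E^{\alpha,\beta}_t$ with $t\in(b, 1/(4a))$, which is nonempty precisely because $ab<1/4$. The two-sided Gaussian estimate (\ref{eq06}) shows that $e^{ax^2}E^{\alpha,\beta}_t(x)$ is dominated by a constant multiple of $e^{-(1/(4t)-a)x^2}/\sqrt{B_{\alpha,\beta}(x)}$, which is bounded on $\R$ since $B_{\alpha,\beta}\geq 1$ and $1/(4t)-a>0$; hence it lies in $L_m^\infty(\R,A_{\alpha,\beta})\hookrightarrow M_m^\infty(\R,A_{\alpha,\beta})$ via the embedding recorded in Section~\ref{sec3}. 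Similarly, (\ref{eq05}) gives $e^{b(\lambda^2+\mu^2)}\W^{(\alpha,\beta)}_g(E^{\alpha,\beta}_t)(\lambda,\mu) = e^{-(t-b)(\lambda^2+\mu^2)}$, a bounded function on $\R^2$ since $t>b$, and it therefore belongs to $M_m^\infty(\R^2, A_{\alpha,\beta}\otimes\sigma_{\alpha,\beta})$ by the same embedding.

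The principal difficulty is not conceptual, since the heavy Phragm\'en--Lindel\"of and H\"older machinery has already been put in place for Theorem~\ref{th2}; the delicate point is that in the extremal case (ii) one must \emph{not} discard the constant $C$ produced by the Phragm\'en--Lindel\"of argument (as one does when $q<\infty$), but must instead identify $Ce^{-(\lambda^2+\mu^2)/(4a)}$ via (\ref{eq05}) with the windowed transform of a Gaussian kernel and invoke injectivity. A secondary book-keeping item is that in part (iii) the verifications of membership must be carried out in weighted modulation spaces rather than weighted Lebesgue spaces, which is harmless here because the $L_m^\infty\hookrightarrow M_m^\infty$ embedding converts the bounded Gaussian estimates directly into the required modulation-space statements.
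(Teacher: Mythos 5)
Your proposal is correct and follows essentially the same route as the paper: the paper's proof simply cites Step 1(ii), Step 2(b), and Step 3 of its Theorem \ref{th2} (the Cowling--Price argument), which are exactly the steps you reconstruct — the auxiliary function $\Phi$, the $q=\infty$ refinement of Lemma \ref{lem4} forcing $\Phi$ to be constant, the identification of $Ce^{-(\lambda^2+\mu^2)/(4a)}$ with $C\,\W^{(\alpha,\beta)}_g(E^{\alpha,\beta}_{1/(4a)})$ via (\ref{eq05}) and injectivity, and the family $E^{\alpha,\beta}_t$, $t\in(b,\tfrac{1}{4a})$, for part (iii). If anything, your treatment of case (ii) is slightly cleaner, since the paper appends a somewhat redundant consistency check using the upper Gaussian bound after the identification $f=CE^{\alpha,\beta}_{1/(4a)}$ has already been obtained.
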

\begin{proof}
(i) If $ab > \frac{1}{4}$,  the result follows from   point (ii) of the first step of the proof of Theorem \ref{th2}.

(ii) If $ab = \frac{1}{4}$ and $\| e^{b (\lambda^2+\mu^2)} \W^{(\alpha, \beta)}_g(f) \|_{M_m^\infty (\R^2,  A_{\alpha, \beta} \otimes \sigma_{\alpha, \beta})} < \infty$, then from    (\ref{eq14}) and Step 2(b) of the proof of Theorem \ref{th2}, we have $f = C  E^{\alpha, \beta}_{\frac{1}{4a}} $, for some real constant $C$. Using the property    $B_{\alpha, \beta} (x) \geq 1$, from relations (\ref{eq06}) and (\ref{eq14}), we get  
\[ e^{a x^2} f(x) \leq \frac{2 C e^{\frac{\mu_2}{4a}} a^{\alpha+1}}{\Gamma(\alpha+1) \sqrt{B_{\alpha, \beta}(x)}}, \quad \text{for all } x \in \R. \]
Moreover, from (\ref{eq15}) we have $\| e^{a x^2} f \|_{M_m^\infty(\R, \;A_{\alpha, \beta})} < \infty$, this is impossible unless $f = C  E^{\alpha, \beta}_{\frac{1}{4a}} $. This completes the  result of point (ii).

(iii) If $ab < \frac{1}{4}$, the functions $f = E^{\alpha, \beta}_t$, $t \in (b, \frac{1}{4a})$, satisfy the conditions $(\ref{eq15})$. This completes the proof of the theorem.
\end{proof}  

\subsection{Morgan's theorem for the windowed Opdam--Cherednik transform}\label{sec6}

The aim of this subsection is to prove an $M_m^p$ -- $M_m^q$-version of Morgan's theorem for the windowed Opdam--Cherednik transform. Before we prove the main result of this subsection, we first need the following lemma. 

\begin{lemma}[\cite{far03}, Lemma 2.3] \label{lem1}
	Suppose that $\rho \in (1, 2)$, $q \in [1, \infty]$, $\eta > 0$, $M > 0$ and $ B > \eta \sin \frac{\pi}{2} (\rho-1)$. If $\Phi$ is an entire function on $\C^2$ satisfying the conditions
	\begin{enumerate}
		\item[(i)] $|\Phi(x + iy, u + iv)| \leq M e^{\eta (|y|^\rho+|v|^\rho)},\;$ for any $x,y, u, v \in \R$, 
		\item[(ii)] $ e^{B (|x|^\rho+|u|^\rho)} {\Phi}_{|\R^2}  \in L^q(\R^2) $, 
	\end{enumerate}
	then $\Phi = 0$.
\end{lemma}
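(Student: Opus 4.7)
The plan is to reduce the two-variable assertion to the one-variable Phragmén–Lindelöf/Morgan-type lemma (the 1D version proved in \cite{far03}) by slicing one coordinate at a time, and then to lift the conclusion back to $\C^2$ by analytic continuation.

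First I would fix $u \in \R$ and consider the entire slice $h_u(z_1) := \Phi(z_1, u)$. Setting $v = 0$ in hypothesis (i) yields $|h_u(x+iy)| \leq M e^{\eta |y|^\rho}$, so the 1D growth bound holds with the same constants $M$ and $\eta$. For the decay hypothesis, rewrite (ii) (with the essential supremum modification when $q = \infty$) as
\[ \int_\R \left( \int_\R e^{qB|x|^\rho}|\Phi(x,u)|^q\,dx \right) e^{qB|u|^\rho}\,du < \infty. \]
By Tonelli, the inner integral must be finite for almost every $u \in \R$, so $e^{B|x|^\rho}\Phi(\cdot, u) \in L^q(\R)$ for such $u$. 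Since $B > \eta \sin\frac{\pi}{2}(\rho-1)$, the 1D Morgan-type lemma applies and forces $h_u \equiv 0$ on $\C$ for almost every $u \in \R$.

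Next, by continuity of $\Phi$ in the second variable, we upgrade this to $\Phi(z_1, u) = 0$ for \emph{every} $u \in \R$ and every $z_1 \in \C$. Fixing any $z_1 \in \C$, the entire function $k_{z_1}(z_2) := \Phi(z_1, z_2)$ vanishes on the real axis, hence, by the identity principle for entire functions of one complex variable, it vanishes on all of $\C$. This gives $\Phi \equiv 0$ on $\C^2$.

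The genuine analytical content, and thus the main obstacle, is the 1D Morgan-type lemma invoked in the first step. Its proof (as in \cite{far03}) applies the Phragmén–Lindelöf principle in sectors of aperture $\pi/\rho$ to an auxiliary function of the form $z \mapsto h(z)\exp(\lambda z^\rho)$, with the phase of $\lambda$ chosen so that the growth on the imaginary axis is compensated exactly against the $L^q$ decay on the real axis; the sharp constant $\sin\frac{\pi}{2}(\rho-1)$ emerges precisely from this balance, and a Liouville-type conclusion then gives $h \equiv 0$. The two-variable upgrade contributes only the elementary Fubini/identity-principle reduction sketched above.
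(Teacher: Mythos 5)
The paper does not prove this lemma at all: it is imported verbatim as \cite{far03}, Lemma~2.3, so there is no in-paper argument to compare against. Your proposal supplies an actual derivation, and the reduction it performs is sound: slicing in the second variable preserves hypothesis (i) with the same $M,\eta$; Tonelli (resp.\ the essential-supremum version when $q=\infty$) gives $e^{B|x|^\rho}\Phi(\cdot,u)\in L^q(\R)$ for a.e.\ $u$; the one-variable lemma kills each such slice; and the passage from ``a.e.\ $u$'' to all of $\C^2$ via continuity of $u\mapsto\Phi(z_1,u)$ on a dense set followed by the identity principle in $z_2$ is correct. The one caveat is the one you flag yourself: all of the analytic content sits in the one-dimensional $L^q$ Morgan--Phragm{\'e}n--Lindl{\"o}f lemma, which you only sketch. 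Your sketch in particular glosses over how the $L^q$ decay on $\R$ is converted into the pointwise boundary control that Phragm{\'e}n--Lindl{\"o}f needs; in \cite{far03} (and in the paper's own Lemma~\ref{lem2}, which mimics it) this is done by the averaging device $F(z)=\int_R^{R+1}\Phi(tz)\,dt$, which reduces the $L^q$ case to the $L^\infty$ case at the cost of slightly worsening the constants --- this is exactly where the strict inequality $B>\eta\sin\frac{\pi}{2}(\rho-1)$ is consumed. Since the one-dimensional lemma is itself a citable result from the same reference, invoking it rather than proving it is acceptable, and your two-variable upgrade is a legitimate (and arguably cleaner) alternative to restating the multidimensional result wholesale.
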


As an application of  the above lemma,  in the following,  we obtain a version of the Phragm{\'e}n--Lindl{\"o}f type result for the  weighted modulation spaces on $\R^2$.
\begin{lemma}\label{lem2}
	Suppose that $\rho \in (1, 2)$, $q \in [1, \infty)$, $\eta > 0$, $M > 0$ and $B > \eta \sin \frac{\pi}{2} (\rho-1)$. If $\Phi$ is an entire function on $\C^2$ satisfying the conditions
	\begin{enumerate}
		\item[(i)] $|\Phi(x + iy, u + iv)| \leq M e^{\eta (|y|^\rho+|v|^\rho)}$, for any
		$x,y, u, v \in \R$,
		\item[(ii)] $ e^{B (|x|^\rho+|u|^\rho)} {\Phi}_{|\R^2}  \in M_m^q(\R^2, A_{\alpha, \beta}\otimes \sigma_{\alpha, \beta}
		) $,
	\end{enumerate}
	then $\Phi = 0$.
\end{lemma}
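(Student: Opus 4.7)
The plan is to reduce the statement to Lemma \ref{lem1}. The main task is to upgrade condition (ii), phrased in weighted modulation--space language, to an $L^q$--integrability condition on $\R^2$, possibly at the cost of shrinking $B$ to some $B'$ in the open interval $(\eta\sin\frac{\pi}{2}(\rho-1),B)$, which is nonempty thanks to the strict hypothesis on $B$.

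First, I would replace the measure $A_{\alpha,\beta}\otimes\sigma_{\alpha,\beta}$ by Lebesgue measure. Mimicking the chain of inequalities that opens the proof of Lemma \ref{lem4}, the estimates $A_{\alpha,\beta}(x)\geq 1$ for all $x\in\R$, the asymptotic lower bound $|C_{\alpha,\beta}(\lambda)|^{-2}\geq k_1|\lambda|^{2\alpha+1}$ for $|\lambda|\geq 1$, and the assumption $m\geq 1$ together bound the $M_m^q(\R^2,A_{\alpha,\beta}\otimes\sigma_{\alpha,\beta})$--norm of $e^{B(|x|^\rho+|u|^\rho)}\Phi_{|\R^2}$ from below by a constant times the $M^q(\R^2)$--norm of the same function taken relative to Lebesgue measure. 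Hypothesis (ii) therefore gives $e^{B(|x|^\rho+|u|^\rho)}\Phi_{|\R^2}\in M^q(\R^2)$.

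Next, I would pass from $M^q(\R^2)$ to $L^q(\R^2)$. For $1\leq q\leq 2$ the embedding $M^q\hookrightarrow L^q$ recorded in Section \ref{sec3} closes this step immediately. For $q>2$ that embedding reverses, and one must also exploit condition (i) at $y=v=0$, which furnishes the pointwise bound $|\Phi(x,u)|\leq M$ on $\R^2$. Combining this uniform bound with the $M^q$--information via H\"older's inequality, using the super--exponential slack factor $e^{(B'-B)(|x|^\rho+|u|^\rho)}$ made available by strictly shrinking $B$ to $B'$, then yields $e^{B'(|x|^\rho+|u|^\rho)}\Phi_{|\R^2}\in L^q(\R^2)$. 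With this $L^q$--integrability in hand, applying Lemma \ref{lem1} with $B'$ in place of $B$---admissible because $B'>\eta\sin\frac{\pi}{2}(\rho-1)$ by construction---forces $\Phi\equiv 0$, as required.

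The main obstacle is the passage from $M^q$ to $L^q$ in the range $q>2$, where the two scales of norms are not directly comparable. The decisive ingredient is the restriction of the global growth bound (i) to real arguments, which supplies the uniform pointwise control on $\Phi_{|\R^2}$; this, combined with the slack in the strict inequality $B>\eta\sin\frac{\pi}{2}(\rho-1)$, is exactly what is needed to interpolate across the modulation/Lebesgue scale gap and feed the output into the Farah--Mokni lemma.
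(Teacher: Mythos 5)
Your reduction works as stated only for $q\le 2$; for $q>2$ the step that converts the $M^{q}$ hypothesis into $L^{q}$-integrability of $e^{B'(|x|^{\rho}+|u|^{\rho})}\Phi_{|\R^2}$ is a genuine gap, and that is precisely the range the lemma is designed to cover. There is no H\"older or interpolation inequality of the form $\|F\|_{L^q}\lesssim\|F\|_{L^\infty}^{1-\theta}\|F\|_{M^q}^{\theta}$ when $q>2$: for $q>2$ the containment between the two scales runs the wrong way ($L^q\hookrightarrow M^q$, not conversely), and membership of $e^{B(|x|^{\rho}+|u|^{\rho})}\Phi$ in $M^q$ only controls \emph{smoothed averages} $\int e^{B(\cdot)}\Phi\,\overline{h(\cdot-x)}e^{-2\pi i\xi\cdot}$, which oscillation can make small without forcing any pointwise or $L^q$ decay of $\Phi$ beyond the trivial bound $|\Phi|\le M$. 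The slack $B'<B$ does not rescue this: writing $e^{B'(|x|^{\rho}+|u|^{\rho})}\Phi=e^{(B'-B)(|x|^{\rho}+|u|^{\rho})}\cdot\bigl(e^{B(|x|^{\rho}+|u|^{\rho})}\Phi\bigr)$, multiplication by the decaying weight at best keeps you inside a modulation space of the same frequency index, and $M^q\not\subset L^q$ for $q>2$. So the decisive claim of your third paragraph is unsupported.

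The paper circumvents this with a device your outline is missing: it replaces $\Phi$ by the dilated average $F(z_1,z_2)=\int_R^{R+1}\int_R^{R+1}\Phi(t_1z_1,t_2z_2)\,dt_1\,dt_2$. For real nonzero arguments a change of variables gives $F(x,u)=\frac{1}{xu}\int_{Rx}^{(R+1)x}\int_{Ru}^{(R+1)u}\Phi$, and the modulation-space duality pairing (H\"older for $M_m^q$ against the $M^{q'}_{\frac{1}{m}}$-norm of the indicator of the box, which grows only polynomially in $|x|,|u|$, cf.\ Lemma \ref{lem3}) turns hypothesis (ii) into the \emph{pointwise} bound $e^{BR^{\rho}(|x|^{\rho}+|u|^{\rho})}F_{|\R^2}\in L^\infty(\R^2)$; one then invokes Lemma \ref{lem1} in its $q=\infty$ case, so no passage from $M^q$ to $L^q$ is ever needed. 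The cost is that condition (i) degrades to $|F(x+iy,u+iv)|\le Me^{(R+1)^{\rho}\eta(|y|^{\rho}+|v|^{\rho})}$, which is absorbed by choosing $R$ large enough that $BR^{\rho}>(R+1)^{\rho}\eta\sin\frac{\pi}{2}(\rho-1)$, and the conclusion $F\equiv0$ transfers back to $\Phi$ because $D^{(n)}F(0)$ is a nonzero multiple of $D^{(n)}\Phi(0)$ for every $n$. This averaging step is the missing idea; without it, or some substitute exploiting the analyticity of $\Phi$ to rule out oscillatory cancellation, your argument does not close for $q>2$.
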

\begin{proof}
	Let $R > 0$ be such that 
	\[B > \eta ((R+1)/R)^\rho \sin \frac{\pi}{2} (\rho-1).\] 
	Let us  consider the entire function $F$ on $\C^2$   by 
	\[ F(z_1, z_2)=\int_R^{R+1} \int_R^{R+1} \Phi(t_1z_1, t_2z_2)\; dt_1 dt_2.\]
	Then  using  Cauchy's integral formula for several complex variables (see \cite{sch}, Theorem 1.3.3), we obtain that the derivatives of $F$ satisfy the condition 
	\begin{align*}
		D^{(n)}F(0) &= \left[\left( (R + 1)^{n+1} - R^{n+1}\right) /(n+1)\right]^2 \;D^{(n)}\Phi(0), \quad  \text{for any $n\in \mathbb{N}$}.  
	\end{align*} 
	Therefore, $\Phi =0$ if and only if $F =0$. By assumption (i), we get 
	\begin{equation}\label{eq01}
		|F(x+iy, u+iv)| \leq  M \;e^{{(R+1)}^{\rho} \eta(|y|^\rho+|v|^\rho)},\mathrm{\;for \;any\;} x,y, u, v \in \R.
	\end{equation}
	Let $x, u \in \R \setminus \{0\}$. Then using the change of variables $x_1 = xt_1$  and $u_1 = ut_2$, we obtain
	\[ F(x, u)= \frac{1}{x u} \int_{Rx}^{(R+1)x} \int_{Ru}^{(R+1)u} \Phi(x_1, u_1) \;dx_1 du_1.\]
	Hence, 
	\begin{align*}
		| F(x, u)| &\leq \frac{1}{|x||u|} \int_{Rx}^{(R+1)x} \int_{Ru}^{(R+1)u} |\Phi(x_1, u_1)| \;dx_1 du_1\\&
		= \frac{1}{|x||u|} \int_{Rx}^{(R+1)x} \int_{Ru}^{(R+1)u} \;e^{B (|x_1|^\rho+|u_1|^\rho)}\;e^{-B (|x_1|^\rho+|u_1|^\rho)} |\Phi(x_1, u_1)|  \;dx_1 du_1\\
		&\leq  \frac{1}{|x||u|}\;e^{-B R^\rho(|x|^\rho+|u|^\rho)}   \int_{Rx}^{(R+1)x} \int_{Ru}^{(R+1)u} \;e^{B (|x_1|^\rho+|u_1|^\rho)}|\Phi(x_1, u_1)|  \;dx_1 du_1.
	\end{align*}
	Using  H\"older's inequality and the relation (\ref{eq31}), we get
	\begin{align*}
		| F(x, u)| &\leq \frac{1}{|x||u|}\;e^{-B R^\rho(|x|^\rho+|u|^\rho)}    \;\Vert e_B \Phi \Vert_{M_m^q(\R^2)}
		\; \Vert 1 \Vert_{M_{\frac{1}{m}}^{q'}([Rx, (R+1)x]\times [Ru, (R+1)u])}\\
		&\leq \frac{1}{|x||u|}\;e^{-B R^\rho(|x|^\rho+|u|^\rho)}    \;\Vert e_B \Phi \Vert_{M_m^q(\R^2, A_{\alpha, \beta}\otimes \sigma_{\alpha, \beta})}
		\; \Vert 1 \Vert_{M_{\frac{1}{m}}^{q'}([Rx, (R+1)x]\times [Ru, (R+1)u])}, 
	\end{align*} 
	where $e_B(x_1, u_1)=\;e^{B (|x_1|^\rho+|u_1|^\rho)} $ and $q'$ is the conjugate exponent of $q$. Since 
	$$\Vert 1 \Vert_{M_{\frac{1}{m}}^{q'}([Rx, (R+1)x]\times [Ru, (R+1)u])}\leq C |x|^{\frac{2}{q'}}|u|^{\frac{2}{q'}}$$ 
	for some constant $C>0$, we have

	\[| F(x, u)|  \leq \frac{C}{|x|^{1-2/q'}|u|^{1-2/q'}} \; e^{-B R^\rho(|x|^\rho+|u|^\rho)}    \;\Vert e_B \Phi \Vert_{M_m^q(\R^2, A_{\alpha, \beta}\otimes \sigma_{\alpha, \beta})}.\]
	Since $F$ is continuous on $\R^2$, using  assumption (ii), we obtain 
	\begin{equation}\label{eq02}
		e^{B R^\rho(|x|^\rho+|u|^\rho)}    F_{|\R^2} \in L^\infty(\R^2).
	\end{equation}
	Using the inequalities (\ref{eq01}) and (\ref{eq02}), and applying Lemma \ref{lem1} for $q=\infty$ to $F$, we get  $F=0$, thus $\Phi=0$. This completes the proof of the lemma.
\end{proof}

\begin{theorem}
	Let $g\in M_{\frac{1}{m}}^{1}(\R, A_{\alpha, \beta})$ be a non-zero window function,  $ p \in [1, \infty]$, $ q \in [1, \infty)$, $a>0$, $b>0$, and let $\alpha, \beta$ be positive real numbers satisfying $\alpha >2$ and $1/\alpha+1/\beta=1$. Suppose that $f$ is a measurable function on $\R$ such that  
	\[ e^{a|x|^\alpha} f \in M_m^p(\R, A_{\alpha, \beta}) \quad \text{and} \quad  e^{b (|\lambda|^\beta+|\mu|^\beta)} \W^{(\alpha, \beta)}_g(f) \in M_m^q (\R^2,  A_{\alpha, \beta} \otimes \sigma_{\alpha, \beta}). \]
	If 
	\[ (a \alpha)^{1/\alpha} (b \beta)^{1/\beta} > \left( \sin \left( \frac{\pi}{2}(\beta -1 ) \right) \right)^{1/\beta},  \]
	then $f = 0$. 
\end{theorem}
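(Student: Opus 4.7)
The plan is to reduce the statement to an application of the Phragm\'en--Lindl\"of type Lemma \ref{lem2} with exponent $\rho=\beta\in(1,2)$. Set $\Phi(\lambda,\mu):=\W^{(\alpha,\beta)}_g(f)(\lambda,\mu)$ and, exactly as in Step~1 of the proof of Theorem \ref{th2}, I would extend $\Phi$ to an entire function on $\C^2$ using the integral representation (\ref{eq13}) together with the hypothesis $g\in M_{\frac{1}{m}}^{1}(\R, A_{\alpha, \beta})$. The task then reduces to verifying the two hypotheses of Lemma \ref{lem2}.

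For hypothesis (i), I would invoke Young's inequality in the sharp form
\[ |s|\,|\mathrm{Im}(\lambda)| \leq a|s|^\alpha + \frac{|\mathrm{Im}(\lambda)|^\beta}{\beta (a\alpha)^{\beta-1}}, \]
obtained by applying $st\leq s^\alpha/\alpha+t^\beta/\beta$ to $s\mapsto (a\alpha)^{1/\alpha}s$, and likewise for $\mu$. Writing $|f(s)|=(e^{a|s|^\alpha}|f(s)|)\,e^{-a|s|^\alpha}$ and using this inequality to absorb the imaginary-exponential growth of $|g_{\lambda,\mu}^{(\alpha,\beta)}(-s)|$ against the $e^{-a|s|^\alpha}$ factor, H\"older's inequality on the modulation spaces (as in the derivation of (\ref{eq6})) yields
\[ |\Phi(\lambda,\mu)| \leq M\,e^{\eta(|\mathrm{Im}(\lambda)|^\beta+|\mathrm{Im}(\mu)|^\beta)}, \qquad \eta:=\frac{1}{\beta(a\alpha)^{\beta-1}}, \]
with $M>0$ controlled by $\|e^{a|s|^\alpha}f\|_{M_m^p(\R,A_{\alpha,\beta})}$ and $\|g\|_{M_{\frac{1}{m}}^{1}(\R,A_{\alpha,\beta})}$; here one exploits the translation- and modulation-invariance of the $M_{\frac{1}{m}}^{p'}$-norm applied to $\tau_{\mathrm{Re}\,\lambda}^{(\alpha,\beta)}\M_{\mathrm{Re}\,\mu}^{(\alpha,\beta)}g$, as in Theorem \ref{th2}. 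Hypothesis (ii) with $B=b$ is then nothing but the second decay assumption on $\W^{(\alpha,\beta)}_g(f)$.

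Raising the Morgan threshold $(a\alpha)^{1/\alpha}(b\beta)^{1/\beta}>(\sin(\frac{\pi}{2}(\beta-1)))^{1/\beta}$ to the $\beta$-th power and using $\beta/\alpha=\beta-1$, one checks that this is exactly the condition $b>\eta\sin(\frac{\pi}{2}(\beta-1))$ required by Lemma \ref{lem2}. Applying the lemma forces $\Phi\equiv 0$ on $\C^2$, hence $\W^{(\alpha,\beta)}_g(f)\equiv 0$ on $\R^2$. Plancherel's formula (\ref{eq17}) then gives $\|f\|_{L^2(\R,A_{\alpha,\beta})}\|g\|_{L^2(\R,A_{\alpha,\beta})}=0$, and since $g$ is non-zero we conclude $f=0$ almost everywhere on $\R$.

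The principal obstacle I anticipate is the rigorous derivation of the complex-growth bound in the second step: the window $g$ is only assumed to lie in $M_{\frac{1}{m}}^{1}$, so unlike the Opdam--Cherednik eigenfunction setting there is no a priori pointwise bound on $|g_{\lambda,\mu}^{(\alpha,\beta)}(-s)|$ for complex $\lambda,\mu$. The workaround, paralleling the handling of the Gaussian weight in Theorem \ref{th2}, is to use Young's inequality to pull the imaginary-exponential factors out of the integral, reducing the estimate to a real-parameter bound on $\tau_{x}^{(\alpha,\beta)}\M_{\xi}^{(\alpha,\beta)}g$ that is controlled by $\|g\|_{M_{\frac{1}{m}}^{1}}$.
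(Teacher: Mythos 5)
Your proposal is correct and follows essentially the same route as the paper: extend $\W^{(\alpha,\beta)}_g(f)$ to an entire function on $\C^2$, use H\"older's inequality together with the convexity (Young) inequality $|ty|\leq C^\alpha|t|^\alpha/\alpha+|y|^\beta/(\beta C^\beta)$ and the translation/modulation invariance of the $M^{p'}_{1/m}$-norm to get the growth bound of Lemma \ref{lem2}, then conclude via Plancherel. The only (immaterial) difference is that you take the endpoint value $C=(a\alpha)^{1/\alpha}$, giving $\eta=1/(\beta(a\alpha)^{\beta-1})$ directly, whereas the paper chooses an interior point $C$ of the interval $I$ so that a strictly negative exponent $-(a-C^\alpha/\alpha)|s|^\alpha$ remains.
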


\begin{proof}
	Assume that $f$ is a measurable function on $\R$ such that
	\begin{equation}\label{eq2}
		e^{a|x|^\alpha} f \in M_m^p(\R, A_{\alpha, \beta})
	\end{equation}
	and
	\begin{equation}\label{eq3}
		e^{b (|\lambda|^\beta+|\mu|^\beta)} \W^{(\alpha, \beta)}_g(f) \in M_m^q (\R^2,  A_{\alpha, \beta} \otimes \sigma_{\alpha, \beta}).
	\end{equation}
	To prove that the windowed Opdam--Cherednik transform of $f$ satisfies the conditions (i) and (ii) of Lemma \ref{lem2}, 	we use conditions (\ref{eq2}) and (\ref{eq3}), and   we deduce that $f=0$ almost everywhere.
	
	The function
	\[ \W^{(\alpha, \beta)}_g(f)(\lambda,\mu)=\int_\R f(s) \; \overline{g_{\lambda,\mu}^{(\alpha, \beta)}(-s)} \;  A_{\alpha, \beta}(s) \; ds\] 
	is well defined, entire on $\C^2$, and satisfies the condition
	\begin{align}\label{eq33}\nonumber
		|\W^{(\alpha, \beta)}_g(f)(\lambda,\mu)| & \leq   \int_{\R} |f(s)|\; |g_{\lambda,\mu}^{(\alpha, \beta)}(-s)|\; A_{\alpha, \beta} (s) ds \\\nonumber
		& \leq \; \Big\Vert e^{a|s|^\alpha} f \Big\Vert_{M_m^p(\R, A_{\alpha, \beta})} \; \Big\Vert e^{-a|s|^\alpha} |g_{\lambda,\mu}^{(\alpha, \beta)} | \Big\Vert_{M_{\frac{1}{m}}^{p'}(\R, A_{\alpha, \beta})},   \text{by H\"older's inequality}, \\
		& \leq   C\; \Big\Vert e^{-a|s|^\alpha} |g_{\lambda,\mu}^{(\alpha, \beta)} | \Big\Vert_{M_\frac{1}{m}^{p'}(\R, A_{\alpha, \beta})}, \quad \text{by } (\ref{eq2}),
\end{align}
where $C$ is a constant and $p'$ is the conjugate exponent of $p$. 
	
Let
\[C \in I= \left( (b \beta)^{-1/\beta} \left( \sin \left( \frac{\pi}{2}(\beta -1 ) \right) \right)^{1/\beta}, \; (a \alpha)^{1/\alpha}  \right) . \]
Now 
\begin{align}\label{eq32}\nonumber
&	\Big\Vert e^{-a|s|^\alpha} |g_{\lambda,\mu}^{(\alpha, \beta)} | \Big\Vert_{M_\frac{1}{m}^{p'}(\R, A_{\alpha, \beta})}\\\nonumber&=	\Big\Vert e^{-a|s|^\alpha} e^{-|s| (|\operatorname{Im}(\lambda)|^\beta+|\operatorname{Im}(\mu)|^\beta)^{\frac{1}{\beta} }}  e^{|s| (|\operatorname{Im}(\lambda)|^\beta+|\operatorname{Im}(\mu)|^\beta)^{\frac{1}{\beta} }} |g_{\lambda,\mu}^{(\alpha, \beta)} | \Big\Vert_{M_\frac{1}{m}^{p'}(\R, A_{\alpha, \beta})}\\
		&\leq \Big\Vert e^{-a|s|^\alpha} e^{|s| (|\operatorname{Im}(\lambda)|^\beta+|\operatorname{Im}(\mu)|^\beta)^{\frac{1}{\beta} }} |g_{\lambda,\mu}^{(\alpha, \beta)} | \Big\Vert_{M_\frac{1}{m}^{p'}(\R, A_{\alpha, \beta})}.
	\end{align}
	Applying the convex inequality
	\[ |ty| \leq \left(\frac{1}{\alpha}\right)|t|^\alpha + \left(\frac{1}{\beta}\right)|y|^\beta \]
	to the positive numbers $C|t|$ and $|y|/C$, we get 
	\[ |ty| \leq \frac{C^\alpha}{\alpha} \;|t|^\alpha + \frac{1}{\beta C^\beta} \;|y|^\beta, \]
	and thus

	\begin{align}\label{eq34}\nonumber
		&	\Big\Vert e^{-a|s|^\alpha} e^{|s| (|\operatorname{Im}(\lambda)|^\beta+|\operatorname{Im}(\mu)|^\beta)^{\frac{1}{\beta} }} |g_{\lambda,\mu}^{(\alpha, \beta)} | \Big\Vert_{M_\frac{1}{m}^{p'}(\R, A_{\alpha, \beta})}\\
		&\leq e^{\frac{ |\operatorname{Im}(\lambda)|^\beta+|\operatorname{Im}(\mu)|^\beta }{\beta C^\beta }}
		\Big\Vert e^{-(a-C^\alpha/\alpha )|s|^\alpha}    |g_{\lambda,\mu}^{(\alpha, \beta)}| \Big\Vert_{M_\frac{1}{m}^{p'}(\R, A_{\alpha, \beta})}.
	\end{align}
	Since $C \in I$ , it follows that $a > C^\alpha / \alpha$, and thus   
	\begin{align}\label{eq35}\nonumber
		\Big\Vert e^{-(a-C^\alpha/\alpha )|s|^\alpha}    |g_{\lambda,\mu}^{(\alpha, \beta)} | \Big\Vert_{M_\frac{1}{m}^{p'}(\R, A_{\alpha, \beta})}  &\leq \Big\Vert   \tau_\lambda^{(\alpha, \beta)} \M^{(\alpha, \beta)}_\mu g\Big\Vert_{M_{\frac{1}{m}}^{p'}(\R, A_{\alpha, \beta})}\\
		&\leq  \Vert     g \Vert_{M_{\frac{1}{m}}^{p'}(\R, A_{\alpha, \beta})}\leq  \Vert   g   \Vert_{M_{\frac{1}{m}}^{1}(\R, A_{\alpha, \beta})}<\infty.
	\end{align}
	From  (\ref{eq32}), (\ref{eq34}) and (\ref{eq35}), we get 
	\[\Big\Vert e^{-a|s|^\alpha} |g_{\lambda,\mu}^{(\alpha, \beta)} | \Big\Vert_{M_\frac{1}{m}^{p'}(\R, A_{\alpha, \beta})} < \infty. \]
	Moreover, 
	\begin{equation}\label{eq4}
		|\W^{(\alpha, \beta)}_g(f)(\lambda,\mu)|  \leq \text{Const.}\; e^{\frac{ |\operatorname{Im}(\lambda)|^\beta+|\operatorname{Im}(\mu)|^\beta }{\beta C^\beta }} \quad \text{for any } \lambda, \mu  \in \C.
	\end{equation}
	Using the  condition (\ref{eq3}) and inequality (\ref{eq4}), we obtain that
	the function $\Phi(\lambda, \mu)=\W^{(\alpha, \beta)}_g(f)(\lambda,\mu)$ satisfies the assumptions (i) and (ii) of Lemma \ref{lem2} with $\rho = \beta$, $\eta=1/(\beta C^\beta)$, and $B=b$. The condition $C \in I$ implies the inequality 
	\[ b > \frac{1}{\beta C^\beta}  \sin \left( \frac{\pi}{2}(\beta -1 ) \right),  \]
	which gives $\W^{(\alpha, \beta)}_g(f)=0$ by Lemma \ref{lem2}, then $f=0$ by (\ref{eq17}).  This completes the proof of the theorem.
\end{proof}

\section*{Acknowledgments}
The first author gratefully acknowledges the support provided by IIT Guwahati, Government of India. 
The second author is deeply indebted to Prof. Nir Lev for several fruitful discussions and generous comments. The authors wish to thank the anonymous referees for their helpful comments and suggestions that helped to improve the quality of the paper.

\section*{Declarations} 
\textbf{Conflict of interest} No potential conflict of interest was reported by the authors.

\end{document}